\title{Integrating Hasse-Schmidt derivations}
\author[D. HOFFMANN]{Daniel Hoffmann$^{\dagger}$}
\thanks{2010 \textit{Mathematics Subject Classification}. Primary 13N15; Secondary 14L15.}
\thanks{\textit{Key words and phrases}. Hasse-Schmidt derivations, group scheme actions.}
\thanks{$^{\dagger}$AMDG}
\address{$^{\dagger}$Instytut Matematyczny\\
Uniwersytet Wroc{\l}awski\\
Wroc{\l}aw\\
Poland}
\email{daniel.hoffmann@math.uni.wroc.pl}
\author[P. KOWALSKI]{Piotr Kowalski$^{\spadesuit}$}
\thanks{$^{\spadesuit}$Supported by NCN grant 2012/07/B/ST1/03513}
\address{$^{\spadesuit}$Instytut Matematyczny\\
Uniwersytet Wroc{\l}awski\\
Wroc{\l}aw\\
Poland}
 \email{pkowa@math.uni.wroc.pl} \urladdr{http://www.math.uni.wroc.pl/\textasciitilde pkowa/ }
  \DeclareMathOperator{\id}{id}
 \DeclareMathOperator{\fr}{Fr} \DeclareMathOperator{\lie}{Lie}
\DeclareMathOperator{\im}{im}  
\DeclareMathOperator{\ch}{char}
\DeclareMathOperator{\ev}{ev}
\DeclareMathOperator{\evv}{ev}
\newtheorem{theorem}{Theorem}[section]
\newtheorem{prop}[theorem]{Proposition}
\newtheorem{lemma}[theorem]{Lemma}
\newtheorem{cor}[theorem]{Corollary}
\theoremstyle{definition}
\newtheorem{definition}[theorem]{Definition}
\newtheorem{example}[theorem]{Example}
\newtheorem{remark}[theorem]{Remark}
\newtheorem{question}[theorem]{Question}
\begin{document}

\newcommand{\twoc}[3]{ {#1} \choose {{#2}|{#3}}}
\newcommand{\thrc}[4]{ {#1} \choose {{#2}|{#3}|{#4}}}
\newcommand{\Zz}{{\mathds{Z}}}
\newcommand{\Ff}{{\mathds{F}}}
\newcommand{\Cc}{{\mathds{C}}}
\newcommand{\Rr}{{\mathds{R}}}
\newcommand{\Nn}{{\mathds{N}}}
\newcommand{\Qq}{{\mathds{Q}}}
\newcommand{\Kk}{{\mathds{K}}}
\newcommand{\Pp}{{\mathds{P}}}
\newcommand{\ddd}{\mathrm{d}}
\newcommand{\Aa}{\mathds{A}}
\newcommand{\dlog}{\mathrm{ld}}
\newcommand{\ga}{\mathbb{G}_{\rm{a}}}
\newcommand{\gm}{\mathbb{G}_{\rm{m}}}
\newcommand{\gaf}{\widehat{\mathbb{G}}_{\rm{a}}}
\newcommand{\gmf}{\widehat{\mathbb{G}}_{\rm{m}}}

\maketitle
\begin{abstract}

We study integrating (that is expanding to a Hasse-Schmidt derivation) derivations, and more generally truncated Hasse-Schmidt derivations, satisfying iterativity conditions given by formal group laws. Our results concern the cases of the additive and the multiplicative group laws. We generalize a theorem of Matsumura about integrating nilpotent derivations (such a generalization is implicit in work of Ziegler) and we also generalize a theorem of Tyc about integrating idempotent derivations.
\end{abstract}

\section{Introduction}
The algebraic theory of derivations is an important tool in commutative algebra. This theory works better in the characteristic $0$ case due to the fact that any derivation vanishes on the set of $p$-th powers, if the characteristic is $p>0$. To deal with this problem, Hasse and Schmidt introduced \emph{higher differentiations} \cite[p. 224]{HS}
which we call \emph{HS-derivations} in this paper. An HS-derivation (see Definition \ref{defhs}) is a sequence of maps having a usual derivation as its first element. In the case of a $\Qq$-algebra, any derivation uniquely expands to an \emph{iterative} HS-derivation (see Definition \ref{defiter}) and the two theories coincide.

Matsumura obtained several interesting results about expanding (called \emph{integrating} in \cite{Mats1}) derivations to HS-derivations in the case of positive characteristic. The first result \cite[Theorem 6 and Corollary]{Mats1} says that any derivation on a field is (non-uniquely) integrable.
The second result \cite[Theorem 7]{Mats1} says that a derivation $D$ on a field can be integrated to an iterative HS-derivation (\emph{strongly integrated} in Matsumura's terminology) if and only if composing $D$ with itself $p$ times gives the $0$-map.
In our paper we prove a version of the second Matsumura's result mentioned above.

Analyzing the definition of an iterative HS-derivation, one realizes \cite[(1.9) and (1.10)]{Mats1} that the iterativity condition is given by the additive \emph{formal group law} $\gaf=X+Y$. It is natural to ask what happens if the additive formal group law is replaced with another formal group law $F$, e.g. the multiplicative one $\gmf=X+Y+XY$. Such a replacement naturally leads to a definition of an \emph{$F$-iterative} HS-derivation, see Definition \ref{defiter}. Such derivations were considered before in a number of contexts. In \cite{Tyc}, they are defined as actions of a formal group law on an algebra (see also \cite{cr98} and \cite{cr2001}). In a more general setting (i.e. on arbitrary schemes), they appear in \cite[Def. 4.2]{Gil} (for an even more general setting, see \cite{MS}).

Any formal group (law) $F$ can be considered as a direct limit of certain finite or truncated group laws $F[m]$. These truncated group laws give rise to the definition of $F[m]$-iterative truncated HS-derivations (see Definition \ref{deffitertrun}) and we can ask whether such derivations are integrable. We prove the following.
\begin{theorem}\label{multmats}
Let $F$ be a formal group law such that $F\cong \gaf$ or $F\cong \gmf$ and $m>0$. Then any $F[m]$-iterative truncated HS-derivations on a field of positive characteristic can be integrated to an $F$-iterative HS-derivation.
\end{theorem}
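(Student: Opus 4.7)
The plan is to treat the two cases $F \cong \gaf$ and $F \cong \gmf$ separately, reducing each to the canonical formal group law via the given isomorphism. If $\phi : F \to \gaf$ (respectively $F \to \gmf$) is the isomorphism, then post-composing the truncated HS-derivation with $\phi$ converts its $F[m]$-iterativity into $\gaf[m]$-iterativity (resp.\ $\gmf[m]$-iterativity), and any integration of the transported object pulls back along $\phi^{-1}$ to an $F$-iterative integration of the original. Thus it suffices to treat the standard $\gaf$ and $\gmf$.

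For $F = \gaf$, the argument generalizes Matsumura's theorem in the form implicit in Ziegler's work. I would proceed by induction on the truncation level $n \geq m$, extending $(D_0, \ldots, D_n)$ to $(D_0, \ldots, D_{n+1})$. The relation $D_i \circ D_j = \binom{i+j}{i} D_{i+j}$ forces $D_{n+1}$ from the earlier $D_i$'s whenever $n+1$ is not a power of the characteristic $p$, while at the $p$-power levels $D_{n+1}$ is a genuinely new derivation that must be chosen by hand, subject only to compatibility with the iterativities already in place. The required freedom comes from the existence of a $p$-basis of $K$ over $K^p$, which lets one specify a derivation freely on independent elements.

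For $F = \gmf$, I would adapt Tyc's theorem on idempotent derivations. The strategy parallels the additive case, but the combinatorics of $\gmf$-iterativity is different, reflecting the comultiplication $t \mapsto t + u + tu$. After the change of variables $s = 1 + t$, a $\gmf$-iterative HS-derivation corresponds to a continuous $K$-algebra homomorphism into $K[[s-1]]$ compatible with the group-like comultiplication $s \mapsto s \otimes s$; this recasts the extension problem multiplicatively and is well-suited to the same inductive construction via a $p$-basis, with the free choices now occurring at the multiplicative analogues of the $p$-power indices.

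The main obstacle in both cases is to show that the free choices available at each critical level are simultaneously compatible with \emph{all} previously imposed iterativity relations $D_i \circ D_j = \sum_k c_{ijk} D_k$ with $i + j = n+1$. In the additive case this reduces to a binomial-coefficient identity modulo $p$; in the multiplicative case it reduces to the corresponding $\gmf$-analogue, obtained by expanding powers of $1 + t$. The hypothesis that $K$ is a field is essential throughout: it provides the $p$-basis and, more conceptually, ensures that $K$ is formally smooth over its prime subfield, which is what makes the relevant obstructions vanish.
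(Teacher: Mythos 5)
Your reduction to the standard laws and your use of a $p$-basis are in the spirit of the paper, but the heart of the argument is missing. The step ``at the $p$-power levels $D_{n+1}$ is a genuinely new derivation that must be chosen by hand, subject only to compatibility with the iterativities already in place'' is precisely the entire difficulty, and your proposed resolution --- that the compatibility ``reduces to a binomial-coefficient identity modulo $p$'' and that formal smoothness of $K$ over its prime field ``makes the relevant obstructions vanish'' --- does not work. The obstruction is not one of extending maps along a formally smooth extension; it is the solvability, inside the fixed field $K$, of the equations that iterativity forces on the new map at each $p$-power level. Concretely, the paper's additive case hinges on producing a \emph{canonical element} $x$ with $\partial_1(x)=1$ and $\partial_i(x)=0$ for $1<i<p^m$ (Proposition \ref{caninacala}); the inductive step there requires the nontrivial surjectivity statement $\im(\partial)=\ker(\partial^{(p-1)})$ for a derivation with $\partial^{(p)}=0$ (Lemma \ref{imisker}), which in turn rests on the computation $[K:C]=p^m$ for the constants of a truncated iterative derivation (Proposition \ref{degree}). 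The multiplicative case needs the analogous fixed-point statement $\partial(x)=x$ (Lemma \ref{dxisx}) together with a genuinely different correction procedure (Lemmas \ref{lemma1} and \ref{lemma2}). None of this content is visible in your sketch. Moreover, if ``formal smoothness kills the obstructions'' were a valid argument, it would apply verbatim to an arbitrary formal group law, answering Question \ref{mainquestion} affirmatively --- yet the authors explicitly do not know the answer and cite evidence that it may be negative. Your argument proves too much.

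A secondary point: the paper does not prolong the truncated derivation level by level on $K$ at all. Once the canonical element $x$ is found, it completes $\{x\}$ to a $p$-basis $B_0\cup\{x\}$ of $K$ over $M$ with $B_0$ inside the constants (Lemma \ref{pbasis}), writes down the full untruncated canonical $F$-derivation on $M(B_0\cup\{x\})$ in closed form (Examples \ref{gaitex} and \ref{canderm}), and extends it to $K$ in a single step using that $M(B_0\cup\{x\})\subseteq K$ is \'{e}tale; agreement with the given truncation is then checked on the $p$-basis via Lemma \ref{ppowers}. If you wish to salvage your level-by-level scheme, you would still need to prove an existence statement equivalent to the canonical-element propositions, so nothing is gained; as written, the proposal has a genuine gap exactly where the theorem's content lies.
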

\noindent
For $m=1$ and $F\cong \gaf$, the result above is \cite[Theorem 7]{Mats1}. For $m=1$ and $F\cong \gmf$, the result above (formulated in terms of restricted Lie algebra actions) is contained in the main theorem of \cite{Tyc} (we were unaware of it while writing the first version of this paper). Similar results in the $m=1$ case were also obtained in \cite{cr98} and \cite{cr2001}.

In the additive case (corresponding to the usual iterativity) such a generalization is implicit in the work of Ziegler (\cite{Zieg3}, \cite{Zieg2}) and it was crucial to axiomatize the class of \emph{existentially closed} (in the sense of logic, see \cite{Ho}) iterative HS-fields \cite{Zieg2}. The second author also used similar ideas to find \emph{geometric} axioms of this class \cite{K3}. Our main motivation for considering this topic was to study other theories of existentially closed fields with HS-derivations, such issues are treated in \cite{HK}.

Integrating HS-derivations is also related with singularity theory via Matsumura's positive characteristic version of the Zariski-Lipman conjecture (see \cite[page 241]{Mats1}), but we do not pursue this direction here.

The paper is organized as follows. In Section \ref{secdefinitions}, we set our notation and introduce iterative HS-derivations, where the iterativity notion comes from a truncated/formal group law. In Section \ref{secalgprop}, we prove some preparatory results about composing HS-derivations and their fields of constants. In section \ref{secexpanding}, we prove our theorems about integration of truncated HS-derivations. In Section \ref{multi}, we comment on the ``partial'' (i.e. several HS-derivations) case.

We are grateful to the referee for a very useful report.

\section{Definitions and Notation}\label{secdefinitions}
\noindent
Let us fix a field $k$ and a $k$-algebra $R$. For any function $f:R\to R$, $r\in R$ and a positive integer $n$, $f^n(r)=f(r)^n$ and $f^{(n)}$ is the composition of $f$ with itself $n$ times.

\subsection{Formal group laws and HS-derivations}
\begin{definition}\label{defhs}
A sequence $\partial = (\partial_{n}:R\to R)_{n\in \Nn}$ of additive maps is
called an \emph{HS-derivation} if $\partial_{0}$ is the identity map, and for all $n\in \Nn$ and $x,y\in R$,
$$\partial_{n}(xy) = \sum_{i+j=n}\partial_{i}(x)\partial_{j}(y).$$
If moreover for all $n>0$ and $x\in k$ we have $\partial_n(x)=0$, then we call $\partial$ an \emph{HS-derivation over $k$}.
\end{definition}
\noindent
For any sequence of maps $\partial = (\partial_{n}:R\to R)_{n\in \Nn}$ such that $\partial_{0}$ is the identity map and a variable $X$, we define a map
$$\partial_X:R\to R\llbracket X\rrbracket,\ \ \partial_X(r)=\sum_{n=0}^{\infty}\partial_n(r)X^n.$$
It is easy to see \cite[p. 207]{mat} that $\partial$ is an HS-derivation if and only if $\partial_X$ is a ring homomorphism and that $\partial$ is an HS-derivation over $k$ if and only if $\partial_X$ is a $k$-algebra homomorphism. It is also clear that for a ring homomorphism $\varphi:R\to R\llbracket X\rrbracket$, $\varphi$ is of the form $\partial_X$ for some derivation $\partial$ if and only if the composition of $\varphi$ with the natural projection map $R\llbracket X\rrbracket\to R$ is the identity map.
\begin{definition}\label{defiter}
An HS-derivation $\partial$ is called \emph{iterative} if for all $i,j\in \Nn$ we have
$$\partial_{i}\circ\partial_{j} = {i+j \choose i}\partial_{i+j}.$$
\end{definition}
\noindent
Assume that $S$ is a complete local $R$-algebra and $s_1,\ldots,s_k$ belong to the maximal ideal of $S$. There is a unique $R$-algebra homomorphism
$R\llbracket X_1,\ldots,X_k\rrbracket\to S$ such that each $X_i$ is mapped to $s_i$ \cite[Theorem 7.16]{comm}. We denote this homomorphism by $\ev_{(s_1,\ldots,s_k)}$ and for $F\in R\llbracket X_1,\ldots,X_k\rrbracket$ we often write $F(s_1,\ldots,s_k)$ instead of $\ev_{(s_1,\ldots,s_k)}(F)$.
\\
It is again easy to see \cite[p. 209]{mat} that an HS-derivation $\partial$ is iterative if and only if the following diagram is commutative
\begin{equation*}
 \xymatrix{R \ar[rr]^{\partial_Y} \ar[d]_{\partial_Z} & & R\llbracket Y\rrbracket \ar[d]^{\partial_X\llbracket Y\rrbracket} \\
	   R\llbracket Z\rrbracket \ar[rr]^{\ev_{X+Y}} & & R\llbracket X,Y\rrbracket .}
\end{equation*}
\noindent
One could replace the power series $X+Y$ in the definition above with an arbitrary power series in two variables. But it turns out that we get a meaningful definition only in the case when $F\in k\llbracket X,Y\rrbracket$ is a \emph{formal group law} (over $k$) i.e. if it satisfies
$$F(X,0)=X=F(0,X),\ \ \ F(F(X,Y),Z)=F(X,F(Y,Z)).$$
\begin{remark}\label{formalnecessity}
The necessity of the first condition is clear. The necessity of the associativity condition comes from the associativity of the composition of functions together with some diagram chasing, similarly as in the proof of Proposition \ref{composing}.
\end{remark}
\begin{example} We give examples of formal group laws.
\begin{itemize}
\item The additive formal group law $\gaf=X+Y$.

\item The multiplicative formal group law $\gmf=X+Y+XY$.

\item More generally, any one-dimensional algebraic group $G$ over $k$ gives (after choosing a local parameter at $1\in G(k)$) a formal group law called the \emph{formalization} of $G$ (see \cite[Section 2.2]{Manin}) and denoted by $\widehat{G}$.

\item For any $n>0$, there is a formal group law $\bar{F}_{\Delta_n}$ (see \cite[3.2.3]{Hazew}) over $\Ff_p$. If $n>2$, then this formal group law does not come from the formalization of an algebraic group.
\end{itemize}
\end{example}

\noindent
Let us fix $F$, a formal group law over $k$.
\begin{definition}\label{defefiter}
An HS-derivation $\partial$ over $k$ is called \emph{$F$-iterative} if the following diagram is commutative
\begin{equation*}
 \xymatrix{R \ar[rr]^{\partial_Y} \ar[d]_{\partial_Z} & & R\llbracket Y\rrbracket \ar[d]^{\partial_X\llbracket Y\rrbracket} \\
	   R\llbracket Z\rrbracket \ar[rr]^{\ev_{F}} & & R\llbracket X,Y\rrbracket .}
\end{equation*}
\end{definition}
\noindent
We shorten the long phrase ``$F$-iterative HS-derivation over $k$'' to \emph{$F$-derivation}.
\\
For a one-dimensional algebraic group $G$ over $k$ we use the term \emph{$G$-derivation} rather than $\widehat{G}$-derivation. In particular, a $\ga$-derivation is the same as an iterative HS-derivation.
\\
If $F'$ is also a formal group law and $t$ is a variable, then $\alpha\in tk\llbracket t\rrbracket$ is called a homomorphism between $F$ and $F'$, denoted $\alpha:F\to F'$, if
$$\alpha(F(X,Y))=F'(\alpha(X),\alpha(Y)).$$
\noindent
The two statements below connect homomorphisms of formal group laws with iterative derivations. Note that homomorphisms on the formal group level go the \emph{opposite} direction to $k$-algebra homomorphisms (since the category of complete Hopf algebras is opposite to the category of formal groups, see Section \ref{gpscheme}).
\begin{lemma}\label{pullpushf}
Assume that $\alpha:F\to F'$ is a homomorphism of formal group laws over $k$ and we have the following commutative diagram
\begin{equation*}
  \xymatrix{
R\llbracket X\rrbracket  \ar[rr]^{\ev_{\alpha}} &  & R\llbracket X\rrbracket\\
 &   R\ar[ul]^{(\partial')_X} \ar[ur]_{\partial_X}. & }
\end{equation*}
\begin{enumerate}
\item If $\partial'$ is an $F'$-derivation, then $\partial$ is an $F$-derivation.

\item If $\partial$ is an $F$-derivation and $\ev_{\alpha}$ is one-to-one (equivalently, $\alpha\neq 0$), then $\partial'$ is an $F'$-derivation.
\end{enumerate}
\end{lemma}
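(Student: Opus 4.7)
The plan is to translate $F$-iterativity of an HS-derivation $\partial$ into a single identity of ring homomorphisms $R\to R\llbracket X,Y\rrbracket$, namely $\partial_X\llbracket Y\rrbracket\circ\partial_Y = \ev_F\circ\partial_Z$ (with the analogous identity for $\partial'$ and $F'$). The commutative triangle in the hypothesis unpacks to the identity $\partial_X = \ev_\alpha\circ\partial'_X$, where $\ev_\alpha$ denotes substitution of $\alpha$ for the power-series variable; the same identity holds after renaming the variable to $Y$ or $Z$. The homomorphism property $\alpha(F(X,Y))=F'(\alpha(X),\alpha(Y))$ will then serve as the bridge between the iterativity equations for $\partial$ and for $\partial'$.

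For part (1), I would compute both sides of the $F$-iterativity identity for $\partial$ at some $r\in R$. Since $\partial$ is an HS-derivation over $k$ and $\alpha\in k\llbracket t\rrbracket$, the extension $\partial_X\llbracket Y\rrbracket$ fixes $\alpha(Y)$, and the calculation yields
\[
\bigl(\partial_X\llbracket Y\rrbracket\circ\partial_Y\bigr)(r) \;=\; \sum_{m,n}\partial'_n(\partial'_m(r))\,\alpha(X)^n\alpha(Y)^m,
\]
which is the image of $(\partial'_X\llbracket Y\rrbracket\circ\partial'_Y)(r)$ under the two-variable substitution $(X,Y)\mapsto(\alpha(X),\alpha(Y))$. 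Similarly, using $\alpha\circ F = F'\circ(\alpha\times\alpha)$, the right-hand side $(\ev_F\circ\partial_Z)(r)$ equals the image of $(\ev_{F'}\circ\partial'_Z)(r)$ under the same substitution. The two sides agree because $\partial'$ is assumed $F'$-iterative.

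For part (2), the identical calculation shows that the two sides of the $F'$-iterativity identity for $\partial'$ become equal after pulling back along the substitution $(X,Y)\mapsto(\alpha(X),\alpha(Y))$. It remains to verify that this substitution $R\llbracket X,Y\rrbracket\to R\llbracket X,Y\rrbracket$ is injective whenever $\alpha\neq 0$. One-variable injectivity follows by writing $\alpha = X^d u$ with $d\geq 1$ and $u$ a unit of $k\llbracket X\rrbracket\subseteq R\llbracket X\rrbracket$ (using that the leading coefficient of $\alpha$ lies in $k^\times$, hence is a unit in $R$): the substitution $\ev_\alpha$ then factors as multiplication by powers of a unit and the substitution $X\mapsto X^d$, both of which are injective on $R\llbracket X\rrbracket$ regardless of whether $R$ has zero divisors. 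Viewing $R\llbracket X,Y\rrbracket$ as iterated power series, the two-variable substitution factors into the one-variable substitutions in $X$ and in $Y$ (extended coefficientwise), and is therefore injective as well. The equivalence ``$\ev_\alpha$ injective $\Leftrightarrow$ $\alpha\neq 0$'' is immediate in the remaining direction, as $\alpha = 0$ forces $\ev_\alpha$ to factor through the augmentation $X\mapsto 0$.

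There is no real obstacle: the proof is essentially a formal diagram chase. The one delicate point is the injectivity of $\ev_\alpha$, where it is important that the coefficients of $\alpha$ lie in the field $k$ (so the leading coefficient is automatically invertible in $R$), rather than merely in $R$.
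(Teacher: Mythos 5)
Your argument is correct and is precisely the diagram chase that the paper leaves to the reader: you translate the commutative triangle into $\partial_X=\ev_{\alpha}\circ(\partial')_X$, compare the two iterativity diagrams after the substitution $(X,Y)\mapsto(\alpha(X),\alpha(Y))$ using $\alpha(F(X,Y))=F'(\alpha(X),\alpha(Y))$, and invoke injectivity of that substitution for part (2). One small imprecision: $\ev_{X^d u}$ does not literally factor as $\ev_{X^d}$ followed by multiplication by a fixed unit (the power $u^n$ depends on the degree $n$, and a $d$-th root of $u$ need not exist in characteristic $p$), but the injectivity you need follows at once by inspecting the lowest-order term of $f(\alpha)$ for $f=\sum_{n\geq n_0}a_nX^n$ with $a_{n_0}\neq 0$: its coefficient in degree $dn_0$ is $a_{n_0}c^{n_0}$ with $c\in k^{\times}$ the leading coefficient of $\alpha$, hence nonzero since $c^{n_0}$ is invertible in $R$.
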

\begin{proof} This is a relatively easy diagram chase which we leave to the reader.
\end{proof}

\begin{remark}
Assume that $\ch(k)=0$. Then any derivation $D$ on $R$ uniquely expands to a $\ga$-derivation $(D^{(n)}/n!)_{n\in \Nn}$. Therefore the theory of derivations coincides with the theory of iterative HS-derivations. Considering other formal group laws does not change this theory either, since by \cite[Theorem 1.6.2]{Hazew} each formal group law $F$ over $k$ is isomorphic to $\gaf$, and such an isomorphism gives a bijective correspondence (see  Lemma \ref{pullpushf}) between $\ga$-derivations and $F$-derivations.
\end{remark}
\noindent
Therefore, from now on we assume that $\ch(k)=p>0$, but sometimes we will make comments regarding the characteristic $0$ case.

\subsection{Truncated HS-derivations}
Let us fix  a natural number $m>0$.
\begin{definition}\label{defhstr}
A sequence $\partial = (\partial_{n}:R\to R)_{n<p^m}$ of additive maps is
called an \emph{$m$-truncated HS-derivation} if $\partial_{0}$ is the identity, and for all $n<p^m$ and $x,y\in R$,
$$\partial_{n}(xy) = \sum_{i+j=n}\partial_{i}(x)\partial_{j}(y).$$
If moreover for all $0<n<p^m$ and $x\in k$ we have $\partial_n(x)=0$, then we call $\partial$ an \emph{$m$-truncated HS-derivation over $k$}.
\end{definition}
\noindent
Let $v_m,w_m,u_m$ (or just $v,w,u$ if $m$ is clear from the context) denote the ``$m$-truncated variables'' e.g.
$$R[v_m]=R[X]/(X^{p^m}),\ \ R[v_m,w_m,u_m]=R[X,Y,Z]/(X^{p^m},Y^{p^m},Z^{p^m}).$$
For any sequence of maps $\partial = (\partial_{n}:R\to R)_{n<p^m}$ such that $\partial_{0}$ is the identity map, we define a map
$$\partial_{v_m}:R\to R[v_m],\ \ \partial_{v_m}(r)=\sum_{n=0}^{p^m-1}\partial_n(r)v_m^n.$$
It is easy to see again that $\partial$ is an $m$-truncated HS-derivation if and only if $\partial_{v_m}$ is a ring homomorphism and that $\partial$ is an HS-derivation over $k$ if and only if $\partial_{v_m}$ is a $k$-algebra homomorphism.
\\
\\
Since for all $i,j<p^m$ if $i+j\geqslant p^m$ then ${i+j \choose i}=0$, we can define \emph{iterative $m$-truncated HS-derivations} exactly as in Definition \ref{defiter}.
\begin{remark}\label{oneiterat}
If $\partial=(\partial_i)_{i<p^m}$ is an iterative $m$-truncated HS-derivation, then $\partial_1$ is a derivation such that $\partial_1^{(p)}=0$ (see \cite[page 209]{mat} or a more general Remark \ref{gapower}). Conversely, if $D$ is a derivation such that $D^{(p)}=0$, then $(D^{(i)}/i!)_{i<p}$ is an iterative $1$-truncated HS-derivation. Thus there is a natural bijective correspondence between the set of derivations $D$ such that $D^{(p)}=0$ and the set of iterative $1$-truncated HS-derivations.
\end{remark}
\noindent
The notion of iterativity in the truncated case can be explained again in terms of diagrams. Before this explanation we need some comments about the truncated evaluation maps. If $S$ is any $R$ algebra and $s\in S$ such that $s^{p^m}=0$, then there is a unique $R$-algebra map $\ev_s:R[v_m]\to S$ such that $\ev_s(v_m)=s$. Again, for $f\in R[v_m]$ we sometimes write $f(s)$ for $\ev_s(f)$. Similarly in the case of several truncated variables. An $m$-truncated HS-derivation $\partial$ is iterative if and only if the following diagram is commutative
\begin{equation*}
 \xymatrix{R \ar[rr]^{\partial_w} \ar[d]_{\partial_u} & & R[w] \ar[d]^{\partial_v[w]} \\
	   R[u] \ar[rr]^{\ev_{v+w}} & & R[v,w],}
\end{equation*}
where $v=v_m,w=w_m,u=u_m$ (note that $(v+w)^{p^m}=0$).
\\
\\
Again, we can replace $v+w$ with any $m$-truncated polynomial $f\in k[v,w]$, but to get a meaningful definition (see Remark \ref{formalnecessity}) we need $f$ to be an \emph{$m$-truncated group law} i.e. it should satisfy
$$f(v,0)=v=f(0,v),\ \ \ f(f(v,w),u)=f(v,f(w,u)).$$
\begin{remark}\label{trunremark}
\begin{enumerate}
\item Note that the condition $f(v,0)=v=f(0,v)$ is equivalent to saying that $f$ belongs to the maximal ideal of $k[v,w]$, which in turn is equivalent to the condition $f^{p^m}=0$. Therefore, for a truncated group law $f$, the evaluation map $\evv_f$ makes sense.

\item Truncated group laws correspond to \emph{Hopf algebra} (see e.g. \cite[Section 1.4]{Water}) structures on $k[v]$, where the comultiplication is the map $\ev_f$ (for a given truncated group law $f$). By a theorem of Cartier \cite[Section 11.4]{Water}, a Hopf algebra over a field of characteristic zero is reduced (i.e. has no non-zero nilpotent elements).

\item There is a classical notion of a \emph{(commutative) one dimensional formal group chunk of order $n$} (see \cite[Def. 5.7.1]{Hazew}), which in our positive characteristic case coincides with the notion of an $m$-truncated group law for $n=p^m$ and, as in $(2)$ above, a Hopf algebra structure on $k[v]$. However in the case of characteristic $0$, these notions differ: there are plenty of formal group chunks, but (see $(2)$ above) no Hopf algebra structures on the ring of the form $k[X]/(X^n)$ for $n>1$.
\end{enumerate}
\end{remark}
\noindent
Let us fix $f$, an $m$-truncated formal group law.
\begin{definition}\label{deffitertrun}
An $m$-iterative HS-derivation $\partial$ over $k$ is called \emph{$f$-iterative} if the following diagram is commutative
\begin{equation*}
 \xymatrix{R \ar[rr]^{\partial_w} \ar[d]_{\partial_u} & & R[w] \ar[d]^{\partial_v[w]} \\
	   R[u] \ar[rr]^{\ev_f} & & R[v,w]}
\end{equation*}
where $v=v_m,w=w_m,u=u_m$.
\end{definition}
\noindent
We shorten the long phrase ``$f$-iterative $m$-truncated HS-derivation over $k$'' to \emph{$f$-derivation}.
\\
Let $l$ be a positive integer and let us also take $g$, an $l$-truncated formal group law. We define a homomorphism between truncated group laws $f$ and $g$, denoted by $\alpha:f\to g$, as an element $\alpha\in k[v_m]$ satisfying
$$\alpha^{p^l}=0,\ \ \ \alpha(f(v_m,w_m))=g(\alpha(v_m),\alpha(w_m)).$$
\begin{remark}\label{pullpusht}
Lemma \ref{pullpushf} remains true if we replace formal group laws by truncated group laws (of course the injectivity is not equivalent anymore to being non-zero).
\end{remark}

\subsection{Truncating HS-derivations and Frobenius}\label{sectruncating}
We recall that $F$ is a formal group law over $k$ and $m>0$. Let us define
$$F[m]:= F(v_m,w_m)\in k[v_m,w_m].$$
It is clear that (see also \cite[Lemma 1.1]{Manin}) $F[m]$ is an $m$-truncated group law.
\\
For any 1-dimensional algebraic group $G$ over $k$, we use the term ``$G[m]$-derivation'' rather than ``$\widehat{G}[m]$-derivation''. In particular, we sometimes say $\ga[m]$-derivations for iterative $m$-truncated HS-derivations.
\\
Assume that $f$ is an $m$-truncated group law and take $1\leqslant l\leqslant m$. We define
$$f[l]:= f(v_l,w_l)\in k[v_l,w_l].$$
Then $f[l]$ is an $l$-truncated group law. Clearly this construction coincides with the previous one i.e.
$$F[m][l]=F[l].$$
We also have a natural homomorphism $v_l:f[l]\to f$ of truncated group laws. In particular we can understand $F$ as the limit of the direct system of truncated group schemes $(F[l])_{l>0}$ (see \cite[Lemma 1.1]{Manin}).
\begin{remark}
The situation is very different in the characteristic $0$ case where a formal group law can not be approximated in a similar fashion (see Remark \ref{trunremark}(3)).
\end{remark}
\noindent
Let $j:=m-l$. Then $v_m^{p^j}$ need \emph{not} be a homomorphism $f\to f[l]$ of truncated group laws unless $f$ is defined over the prime field $\Ff_p$. For any field automorphism $\varphi:k\to k$ and $g=\sum_{n<p^m}c_nv_m^n\in k[v_m]$, we denote by $g^{\varphi}$ the truncated polynomial $\sum_{n<p^m}\varphi(c_n)v_m^n$. Similarly for power series. Let $\fr$ denote the Frobenius automorphism of $k$. Then $g$ is a truncated group law if and only if $g^{\varphi}$ is (similarly for formal group laws). Therefore $f^{\fr^{-j}}$ is an $m$-truncated group law and $v_m^{p^j}:f^{\fr^{-j}}\to f[l]$ (similarly for formal group laws).

\section{Algebraic properties of $F$-derivations}\label{secalgprop}
\noindent%
 We assume that $k$ is a perfect field of positive characteristic $p$ and $R$ is a $k$-algebra. We also fix a formal group law $F$ over $k$, a positive integer $m$ and an $m$-truncated group law $f$.

\subsection{Morphisms of group laws and HS-derivations}\label{secmorgl}
As any formal group law over a field is necessarily commutative \cite[Theorem 1.6.7]{Hazew}, an $F$-derivation $\partial=(\partial_i)_{i\in \Nn}$ satisfies $\partial_i\circ \partial_j=\partial_j\circ \partial_i$ for all $i,j\in \Nn$. We will use this fact often.
\begin{remark}\label{grouphs}
There are also consequences of the existence of the ``inverse map'' i.e. a power series $W$ such that $F(X,W(X))=0$. There is a group operation on the set of HS-derivations on $R$ over $k$ defined as follows
$$(\partial\ast \partial')_n=\sum_{i+j=n}\partial_i\circ \partial'_j$$
(see \cite[page 208]{Mats1}). For an $F$-derivation $\partial$, the inverse of $\partial$ (with respect to the group operation above) can be expressed in terms of $W$, e.g. if $\partial$ is a $\ga$-derivation, then the inverse of $\partial$ coincides with $((-1)^n\partial_n)_{n\in \Nn}$.
\end{remark}
\begin{lemma}\label{hsfrob}
Assume that $0<l\leqslant m$ and $j=m-l$.
\begin{enumerate}
\item If $\partial$ is an $F$-derivation on $R$, then $\partial'=(\partial_i)_{i<p^m}$ is an $F[m]$-derivation.

\item If $\partial$ is an $f$-derivation on $R$, then $\partial'=(\partial_i)_{i<p^l}$ is an $f[l]$-derivation.

\item Assume that $\partial$ is an $F$-derivation on $R$, such that for any $n\in \Nn$ nondivisible by $p^m$, we have $\partial_n=0$. Let  $F'=F^{\fr^{m}}$ and $\partial'=(\partial_{ip^m})_{i\in \Nn}$. Then $\partial'$ is an $F'$-derivation.

\item Assume that $\partial$ is an $f$-derivation on $R$, such that for any $n<p^m$ nondivisible by $p^j$, we have $\partial_n=0$. Let  $f'=f^{\fr^{j}}$ and $\partial'=(\partial_{ip^j})_{i<p^l}$. Then $\partial'$ is an $f'[l]$-derivation.
\end{enumerate}
\end{lemma}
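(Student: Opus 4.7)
The plan is to split the four parts into two groups. Parts (1) and (2) reduce to direct truncations of the defining diagrams, while parts (3) and (4) are applications of Lemma \ref{pullpushf} (respectively Remark \ref{pullpusht}) to the ``$p^m$-th power'' (respectively $p^j$-th power) homomorphism of group laws.

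For parts (1) and (2), I would observe that the surjection $R\llbracket X \rrbracket \twoheadrightarrow R[v_m]$ sending $X \mapsto v_m$ (and its analogs in two and three variables) is a ring homomorphism carrying $F$ to $F[m]$ and commuting with the evaluation maps in Definition \ref{defefiter}. Reducing the $F$-iterativity square of $\partial$ through these surjections produces the $F[m]$-iterativity square for $\partial' = (\partial_i)_{i<p^m}$. The Leibniz rule and the normalization $\partial'_0 = \id$ are inherited verbatim from $\partial$. Part (2) is identical in spirit, now via the surjection $R[v_m] \twoheadrightarrow R[v_l]$ that kills $v_m^{p^l}$.

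For parts (3) and (4), the crucial ingredient is the characteristic-$p$ identity
\[
F(X, Y)^{p^m} = F^{\fr^m}(X^{p^m}, Y^{p^m}),
\]
obtained by applying the ring endomorphism $(-)^{p^m}$ to a power series and observing its separate effects on coefficients (through $\fr^m$) and on monomials (through $T \mapsto T^{p^m}$). Consequently $\alpha(T) := T^{p^m}$ is a homomorphism of formal group laws $\alpha : F \to F^{\fr^m}$. The hypothesis $\partial_n = 0$ for $p^m \nmid n$ is equivalent to $\partial_X = \ev_\alpha \circ (\partial')_X$ with $(\partial')_X(r) = \sum_i \partial_{ip^m}(r) X^i$, which is precisely the commuting triangle appearing in Lemma \ref{pullpushf}. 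Since $\alpha \neq 0$, part (2) of that lemma yields the desired $F^{\fr^m}$-iterativity of $\partial'$.

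Part (4) runs the same argument in the truncated setting with $\alpha := v_m^{p^j}$, which satisfies $\alpha^{p^l} = v_m^{p^m} = 0$ and, by the analogous characteristic-$p$ identity, is a homomorphism $f \to f'[l]$ of truncated group laws. The hypothesis on $\partial$ supplies the truncated commuting triangle. As Remark \ref{pullpusht} warns, injectivity of $\ev_\alpha$ is no longer automatic from $\alpha \neq 0$; however $\ev_\alpha : R[v_l] \to R[v_m]$ sends the $R$-basis $\{v_l^i : 0 \leq i < p^l\}$ to the family $\{v_m^{ip^j} : 0 \leq i < p^l\}$, which remains $R$-linearly independent because $ip^j < p^m$, so $\ev_\alpha$ is indeed injective and Remark \ref{pullpusht} concludes. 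Apart from the single characteristic-$p$ identity displayed above, the whole argument is bookkeeping; the only place where I expect to need genuine care is the verification of this injectivity in the truncated setting.
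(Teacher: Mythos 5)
Your proof is correct and follows essentially the same route as the paper: parts (1)--(2) by truncating the iterativity diagrams, and parts (3)--(4) by recognizing $X^{p^m}$ (resp.\ $v_m^{p^j}$) as a homomorphism of (truncated) group laws into the Frobenius twist and invoking Lemma \ref{pullpushf} (resp.\ Remark \ref{pullpusht}). You merely spell out two points the paper leaves implicit --- the identity $F(X,Y)^{p^m}=F^{\fr^m}(X^{p^m},Y^{p^m})$ from Section \ref{sectruncating} and the injectivity of $\ev_{v_m^{p^j}}$ in the truncated case --- and both verifications are accurate.
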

\begin{proof}
The proofs of $(1)$ and $(2)$ are straightforward.
\\
For the proof of $(3)$ consider the following commutative diagram
\begin{equation*}
  \xymatrix{
R\llbracket X\rrbracket  \ar[rr]^{\ev_{X^{p^m}}} &  & R\llbracket X\rrbracket\\
 &   R\ar[ul]^{(\partial')_{X}} \ar[ur]_{\partial_{X}}. & }
\end{equation*}
By the last paragraph of Section \ref{sectruncating}, $X^{p^m}:F\to F'$ is a homomorphism of formal group laws and clearly $\ev_{X^{p^m}}$ is one-to-one. By Lemma \ref{pullpushf}, $\partial'$ is an $F'$-derivation.
\\
The proof of $(4)$ is analogous to the proof of $(3)$ (using Remark \ref{pullpusht}), since the homomorphism
$$\ev_{v_m^{p^j}}:k[v_l]\to k[v_m]$$
is injective.
\end{proof}
\begin{remark}\label{lasthom}
\begin{enumerate}
\item This paper is concerned with the \emph{opposite} operation to the one appearing in Lemma \ref{hsfrob}(1): we take an $F[m]$-derivation and we aim to integrate (i.e. expand) it to an $F$-derivation.

\item One may ask whether any $m$-truncated group law $f$ can be integrated i.e. whether there is a formal group law $F$ such that $F[m]=f$. The answer is positive if and only if $f$ is commutative \cite[Corollary 5.7.4]{Hazew} (see \cite[Example 5.7.8]{Hazew} for an example of a non-commutative $f$).

\item We will see (Corollary \ref{pvanish}) that the vanishing condition in Lemma \ref{hsfrob}(3) above is equivalent to the vanishing of $\partial_{p^i}$ for $i=0,\ldots,m-1$. Similarly in Lemma \ref{hsfrob}(4).
\end{enumerate}
\end{remark}

\subsection{Canonical $F$-derivation}
One could wonder whether non-zero $F$-derivations exist at all. The next result provides a canonical example. Let $t$ denote a variable which will play a somewhat different role than the variables $X,Y,Z$.
\begin{prop}\label{canonicalder}
Consider the map
$$\ev_{F(t,X)}:R\llbracket t\rrbracket\to R\llbracket t,X\rrbracket.$$
There is an $F$-derivation $\partial$ on $R\llbracket t\rrbracket$ over $R$ such that $\partial_X=\ev_{F(t,X)}$.
\end{prop}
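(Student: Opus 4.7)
The plan is to take $\partial$ defined by $\partial_X := \ev_{F(t,X)}$ and verify three things: that this map comes from a sequence $(\partial_n)_{n\in\Nn}$ of additive maps with $\partial_0=\id$, that $\partial$ is an HS-derivation over $R$, and that $\partial$ is $F$-iterative.

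The first two points are essentially tautological from the construction. Since $F(0,0)=0$, the element $F(t,X)$ lies in the maximal ideal of the complete local $R$-algebra $R\llbracket t,X\rrbracket$, hence $\ev_{F(t,X)}$ is a well-defined $R$-algebra homomorphism by the discussion preceding Definition \ref{defiter}. Composing it with the projection $R\llbracket t,X\rrbracket\to R\llbracket t\rrbracket$ that sends $X\mapsto 0$ gives the identity on $R\llbracket t\rrbracket$, because $F(t,0)=t$. By the criterion recorded just after Definition \ref{defhs}, $\ev_{F(t,X)}$ therefore equals $\partial_X$ for a unique HS-derivation $\partial$ on $R\llbracket t\rrbracket$ over $R$.

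The remaining step is to verify $F$-iterativity by tracing a power series $g(t)\in R\llbracket t\rrbracket$ through both legs of the diagram in Definition \ref{defefiter}, with $R$ replaced by $R\llbracket t\rrbracket$ and $k$ by $R$. The composite $\partial_X\llbracket Y\rrbracket\circ \partial_Y$ first substitutes $t\mapsto F(t,Y)$, and then applies $\partial_X$ coefficient-wise to the resulting power series in $Y$, substituting $t\mapsto F(t,X)$; the net effect on $g$ is $g(F(F(t,X),Y))$. The composite $\ev_F\circ \partial_Z$ first substitutes $t\mapsto F(t,Z)$ and then $Z\mapsto F(X,Y)$, giving $g(F(t,F(X,Y)))$. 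The associativity identity $F(F(t,X),Y)=F(t,F(X,Y))$ then forces the two legs to coincide.

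There is no serious obstacle; the proposition is essentially a reformulation of the associativity axiom for $F$. The only care required is in unwinding what $\partial_X\llbracket Y\rrbracket$ means, namely the $R\llbracket Y\rrbracket$-linear extension of $\partial_X$ acting on coefficients, which on the generator $t$ sends $t\mapsto F(t,X)$ with $Y$ treated as a constant.
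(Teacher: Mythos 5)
Your proposal is correct and follows essentially the same route as the paper: identify $\ev_{F(t,X)}$ as a section of the projection using $F(t,0)=t$ to get an HS-derivation over $R$, then reduce the $F$-iterativity diagram to the associativity axiom $F(F(t,X),Y)=F(t,F(X,Y))$ by composing the relevant evaluation maps. The only cosmetic difference is that you trace a general element $g(t)$ through the diagram, whereas the paper records each arrow as an evaluation map and composes those; the content is identical.
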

\begin{proof}
The composition of $\ev_{F(t,X)}$ with the projection map $R\llbracket t,X\rrbracket\to R\llbracket t\rrbracket$ is exactly the map $\ev_{F(t,0)}$. Since $F(t,0)=t$, this composition is the identity map. Thus there is an HS-derivation $\partial$ over $R$ on $R\llbracket t\rrbracket$ such that $\partial_X=\ev_{F(t,X)}$. It remains to check the $F$-iterativity condition, i.e. the commutativity of the following diagram (see Definition \ref{defefiter})
\begin{equation*}
 \xymatrix{R\llbracket t\rrbracket \ar[rr]^{\partial_Y} \ar[d]_{\partial_X} & & R\llbracket t, Y\rrbracket \ar[d]^{\partial_X\llbracket Y\rrbracket} \\
	   R\llbracket t,X\rrbracket \ar[rr]^{\ev_{F}} & & R\llbracket t,X,Y\rrbracket .}
\end{equation*}
Note that $\ev_{F}$ is an $R\llbracket t\rrbracket$-algebra map. We will interpret all the maps in the diagram above as evaluation maps on power series $R$-algebras. We have:
$$\partial_X=\ev_{F(t,X)},\ \partial_Y=\ev_{F(t,Y)},\ \partial_X\llbracket Y\rrbracket=\ev_{(F(t,X),Y)},\ \ev_{F}=\ev_{(t,F(X,Y))}.$$
Therefore we obtain
\begin{equation*}
 \partial_X\llbracket Y\rrbracket\circ\partial_Y=\ev_{F(F(t,X),Y)},\ \
 \ev_{F}\circ\partial_X=\ev_{F(t,F(X,Y))}.
\end{equation*}
Hence the commutativity of the diagram above is equivalent to the associativity axiom for the formal group law $F$.
\end{proof}
\begin{example}\label{gaitex}
For $F=\gaf=X+Y$ and $\partial$ the canonical $F$-derivation on $k\llbracket t\rrbracket$, we see that $\partial_1=\frac{\ddd}{\ddd t}$ is the usual derivative with respect to the variable $t$ and for any $n\in \Nn$ we have
$$ \partial_n\Big(\sum\limits_{i=0}^{\infty}a_i t^i\Big) = \sum\limits_{i=0}^{\infty}a_{i+n}{i+n\choose n}t^{i}.$$
The formula above appears already in \cite{Hasse} (last line on p. 50). It is clear that $\partial$ restricts to the polynomial ring $k[t]$.
\end{example}
\begin{example}\label{canderm}
For $F=\gmf=X+Y+XY$ and $\partial$ the canonical $F$-derivation on $k\llbracket t\rrbracket$, one can compute (similarly as in Example \ref{multiter}) that
$$\partial_n\Big(\sum\limits_{i=0}^{\infty}a_i t^i\Big)=\sum\limits_{m=0}^{\infty}\sum\limits_{j=0}^{\min(m, n)} \frac{(m+n-j)!}{(m-j)! (n-j)! j!}
a_{m+n-j}t^m.$$
Clearly, this HS-derivation restricts to the polynomial ring $k[t]$ as well.
\end{example}

\subsection{Composing HS-derivations}
In this subsection we describe a passage from the diagram describing $F$-iterativity (Definition \ref{defefiter}) to formulas for the actual composition of terms of an  $F$-derivation. The $p$-th compositional power will play a special role.
\\
\\
Let  $\partial=(\partial_0,\partial_1,\ldots)$ be an HS-derivation on $R$. Till Proposition \ref{composing}, we do not assume that $\partial$ obeys any iterativity law, although we still assume that the maps $\partial_i,\partial_j$ commute with each other (see the beginning of Section \ref{secmorgl}). For each $i\geqslant 1$, let
$$E_i:R\llbracket X_1,\ldots,X_{i-1}\rrbracket\to R\llbracket X_1,\ldots,X_{i}\rrbracket,\ \ E_i=\partial_{X_i}\llbracket X_1,\ldots,X_{i-1}\rrbracket.$$
In particular $E_1=\partial_{X_1}$. For any $m\geqslant 1$, let $E_{(m)}$ denote the composition of the maps below:
\begin{equation*}
  \xymatrix{R \ar[r]^{E_1\ \ \ }& R\llbracket X_1\rrbracket\ar[r]^{E_2\ \ }& R\llbracket X_1,X_2\rrbracket \ar[r]^{\ \ \ \ E_3}& \ldots \ar[r]^{E_{m}\ \ \ \ \ \ \ \ } &
  R\llbracket X_1,\ldots,X_{m}\rrbracket  \ar[rr]^{\ \ \ \ \ \ev_{(X,\ldots,X)}} & &R\llbracket X\rrbracket.}
\end{equation*}
Then the map $E_{(m)}:R\to R\llbracket X\rrbracket$ coincides with $(\partial^{\ast m})_X$, where $\partial^{\ast m}=\partial\ast \ldots \ast \partial$ ($m$ times) for the group operation $\ast$ on the set of HS-derivations on $R$ from Remark \ref{grouphs}.
\begin{lemma}\label{epe}
For any $r\in R$ we have:
$$E_{(p)}(r)=\sum_{i=0}^{\infty}\partial_i^{(p)}(r)X^{pi}.$$
\end{lemma}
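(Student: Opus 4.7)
The plan is to unfold $E_{(p)}(r)$ as an explicit sum indexed by $p$-tuples of indices, and then exploit the cyclic $\Zz/p\Zz$-action on such tuples together with the two standing hypotheses of this subsection: $\ch(R)=p$ and the pairwise commutativity of the $\partial_j$'s (which is available here by assumption, even though no iterativity law has been imposed on $\partial$).

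A straightforward induction on $m$, based on the definition $E_j=\partial_{X_j}\llbracket X_1,\ldots,X_{j-1}\rrbracket$ as coefficient-wise extension of $\partial_{X_j}$, shows that
$$(E_m\circ\cdots\circ E_1)(r)=\sum_{i_1,\ldots,i_m\geqslant 0}(\partial_{i_m}\circ\cdots\circ\partial_{i_1})(r)\,X_1^{i_1}\cdots X_m^{i_m}.$$
Setting $m=p$ and applying $\ev_{(X,\ldots,X)}$ gives
$$E_{(p)}(r)=\sum_{n\geqslant 0}\Big(\sum_{i_1+\cdots+i_p=n}\partial_{i_p}\circ\cdots\circ\partial_{i_1}(r)\Big)X^n.$$

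The main step, and essentially the only non-trivial obstacle, is to see that this inner sum collapses to its diagonal contribution. I would let $\Zz/p\Zz$ act by cyclic rotation on the compositions $(i_1,\ldots,i_p)$ of $n$ into $p$ non-negative parts. Since $p$ is prime, every orbit has size either $1$ or $p$. Orbits of size $1$ are exactly the constant tuples $(i,\ldots,i)$ (forcing $n=pi$), and each contributes $\partial_i^{(p)}(r)X^{pi}$. Within a size-$p$ orbit, the iterated compositions $\partial_{i_p}\circ\cdots\circ\partial_{i_1}$ attached to the $p$ rotations of a single tuple are all \emph{equal}, by the assumed pairwise commutativity of the $\partial_j$'s; so the orbit contributes $p$ copies of a single operator applied to $r$, which vanishes because $\ch(R)=p$. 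Summing the surviving diagonal contributions yields the stated identity, and convergence in $R\llbracket X\rrbracket$ is automatic because each coefficient of $X^n$ is a finite sum.
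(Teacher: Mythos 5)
Your proof is correct and follows essentially the same route as the paper: expand $E_{(p)}(r)$ as a sum over $p$-tuples, use commutativity of the $\partial_i$ to identify the operators within each non-diagonal class, and kill those classes in characteristic $p$. The only cosmetic difference is that you count via orbits of the cyclic $\Zz/p\Zz$-rotation action, whereas the paper counts distinct permutations of a tuple (a multinomial coefficient divisible by $p$ unless the tuple is constant); both yield the same collapse to the diagonal.
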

\begin{proof} We have
$$E_{(p)}(r)=\sum_{n=0}^{\infty}\sum_{i_1+\ldots+i_p=n}(\partial_{i_1}\circ \ldots \circ \partial_{i_p})(r)X^{i}.$$
Since all the maps $\partial_i$ commute with each other, the lemma follows from the fact that the number of different permutations of the sequence $(i_1,\ldots,i_p)$ is one if $i_1=\ldots=i_p$ and divisible by $p$ otherwise.
\end{proof}
\begin{cor}\label{ppowerder}
Under the assumptions above, $\partial^{(p)}:=(\partial_i^{(p)})_i$ is an HS-derivation.
\end{cor}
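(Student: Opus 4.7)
The plan is to verify that $(\partial^{(p)})_X : R \to R\llbracket X\rrbracket$, defined by
$r \mapsto \sum_{i=0}^\infty \partial_i^{(p)}(r)\, X^i$,
is a ring homomorphism; by the characterization of HS-derivations in the paragraph following Definition \ref{defhs} (together with the obvious fact that $\partial_0^{(p)} = \id$), this will yield the corollary.

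First I would observe that $E_{(p)} : R \to R\llbracket X\rrbracket$ is a ring homomorphism. Indeed, each $E_i$ is the extension of the ring homomorphism $\partial_{X_i} : R \to R\llbracket X_i\rrbracket$ to power series in the prior variables $X_1,\dots,X_{i-1}$, so each $E_i$ is a ring homomorphism, and the final evaluation $\ev_{(X,\dots,X)}$ is plainly an $R$-algebra homomorphism. The composition $E_{(p)}$ is therefore a ring homomorphism.

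Next, by Lemma \ref{epe}, the image of $E_{(p)}$ lies in the subring $R\llbracket X^p\rrbracket \subseteq R\llbracket X\rrbracket$. Let $\iota : R\llbracket X\rrbracket \to R\llbracket X\rrbracket$ be the injective $R$-algebra homomorphism determined by $X \mapsto X^p$. Then Lemma \ref{epe} says precisely that
\[
E_{(p)} \;=\; \iota \circ (\partial^{(p)})_X.
\]
Since $E_{(p)}$ is a ring homomorphism and $\iota$ is an injective ring homomorphism, $(\partial^{(p)})_X$ is itself forced to be a ring homomorphism (apply $\iota$ to the equalities $(\partial^{(p)})_X(ab) = (\partial^{(p)})_X(a)(\partial^{(p)})_X(b)$ and analogously for addition, then cancel $\iota$ by injectivity). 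This completes the proof.

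There is no real obstacle here: all the work has already been done in Lemma \ref{epe}. The only conceptual point is to recognize that the ``folded'' map $E_{(p)}$ differs from the candidate $(\partial^{(p)})_X$ only by the injective Frobenius substitution $X \mapsto X^p$, and that this injectivity suffices to transport the ring-homomorphism property back from $E_{(p)}$ to $(\partial^{(p)})_X$.
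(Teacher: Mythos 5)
Your proof is correct and follows essentially the same route as the paper: both invoke Lemma \ref{epe} to factor $E_{(p)}$ as the injective substitution $X\mapsto X^p$ composed with $(\partial^{(p)})_X$, and then transport the ring-homomorphism property back through that injection. The only difference is that you additionally spell out why $E_{(p)}$ is a ring homomorphism, which the paper leaves implicit.
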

\begin{proof} Let $\iota:R\llbracket X^p\rrbracket\to R\llbracket X\rrbracket$ denote the inclusion map. By Lemma \ref{epe}, we have
$$\iota\circ (\partial^{(p)})_{X^p}=E_{(p)}.$$
Since $E_{(p)}$ is a ring homomorphism, $(\partial^{(p)})_{X^p}$ is a ring homomorphism as well, so $\partial^{(p)}$ is an HS-derivation.
\end{proof}
\noindent
For an $F$-derivation $\partial$, we aim to express $\partial^{(p)}$ in terms of $F$ and $\partial$. For any positive integer $m$, let
$$[m+1]_F(X)=F(X,[m]_F(X))$$
be the ``multiplication by $m$ map'' ($[1]_F=X$).
\begin{prop}\label{composing}
If $\partial$ is an $F$-derivation, then the following diagram commutes:
\begin{equation*}
  \xymatrix{
R  \ar[rr]^{E_{(m)}} \ar[rd]_{\partial_X} &  & R\llbracket X\rrbracket\\
 &  R\llbracket X\rrbracket .\ar[ru]_{\evv_{[m]_F}} & }
\end{equation*}
\end{prop}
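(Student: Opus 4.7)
The plan is to proceed by induction on $m$, the base case $m=1$ being trivial since $[1]_F(X)=X$ and the composition reduces to $\ev_X\circ\partial_{X_1}=\partial_X$. For the inductive step I will prove a sharper intermediate statement \emph{before} applying the final evaluation $\ev_{(X,\dots,X)}$: define power series $P_m\in R\llbracket X_1,\dots,X_m\rrbracket$ with zero constant term recursively by $P_1=X_1$ and $P_m=F(X_m,P_{m-1})$, and show by induction that
\[
(E_m\circ E_{m-1}\circ\cdots\circ E_1)(r)\;=\;\sum_{n\geqslant 0}\partial_n(r)\,P_m^{\,n}\qquad(r\in R).
\]
The base case is immediate. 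For the inductive step, the induction hypothesis gives $\sum_n\partial_n(r)P_{m-1}^{\,n}$; applying $E_m=\partial_{X_m}\llbracket X_1,\dots,X_{m-1}\rrbracket$ (which is the $R\llbracket X_1,\dots,X_{m-1}\rrbracket$-linear extension of $\partial_{X_m}$, so it fixes $P_{m-1}$) yields
\[
\sum_{n}\partial_{X_m}(\partial_n(r))\,P_{m-1}^{\,n}\;=\;\sum_{n,k}\partial_k(\partial_n(r))\,X_m^{k}P_{m-1}^{\,n}.
\]

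The crux is to recognize the right-hand side as an instance of the $F$-iterativity of $\partial$. By Definition \ref{defefiter}, the identity $\sum_{n,k}\partial_k(\partial_n(r))X^{k}Y^{n}=\sum_n\partial_n(r)F(X,Y)^n$ holds in $R\llbracket X,Y\rrbracket$. Since $P_{m-1}$ has zero constant term (and $X_m$ does too), we may substitute $X\mapsto X_m$, $Y\mapsto P_{m-1}$ using the universal property of $\ev_{(X_m,P_{m-1})}$ recalled after Definition \ref{defiter}; this converts the displayed expression into $\sum_n\partial_n(r)F(X_m,P_{m-1})^n=\sum_n\partial_n(r)P_m^{\,n}$, completing the induction. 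Note in particular that $P_m$ again has zero constant term, so the substitution stays legitimate at the next stage.

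Finally, applying $\ev_{(X,\dots,X)}$ gives $E_{(m)}(r)=\sum_n\partial_n(r)\,Q_m^{\,n}$ where $Q_m:=\ev_{(X,\dots,X)}(P_m)\in R\llbracket X\rrbracket$. A second, trivial induction using the recursion for $P_m$ and the defining relation $[m]_F(X)=F(X,[m-1]_F(X))$ shows $Q_m=[m]_F(X)$. Therefore
\[
E_{(m)}(r)\;=\;\sum_{n\geqslant 0}\partial_n(r)\,[m]_F(X)^n\;=\;\ev_{[m]_F}\bigl(\partial_X(r)\bigr),
\]
which is the commutativity of the diagram.

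The only delicate point is the substitution step: one must be sure that when we read off the $F$-iterativity relation in $R\llbracket X,Y\rrbracket$ and substitute power series with zero constant term for $X$ and $Y$, both sides really are sent to the same element. This is precisely what the uniqueness of the $\ev$-homomorphism from Section \ref{secdefinitions} provides, applied to the two ring homomorphisms in Definition \ref{defefiter} composed with $\ev_{(X_m,P_{m-1})}$. Once this is accepted, the rest is bookkeeping.
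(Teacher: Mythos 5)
Your proof is correct and essentially reproduces the paper's argument in element-wise form: the paper performs the same induction on $m$ with an auxiliary iterated group law $F_m$ (your $P_m$, up to the order of nesting), applies the $F$-iterativity diagram at each step, and then evaluates along the diagonal exactly as you do. The only quibble is your description of $E_m$ as the ``$R\llbracket X_1,\dots,X_{m-1}\rrbracket$-linear extension'' of $\partial_{X_m}$ --- it is rather the $k\llbracket X_1,\dots,X_{m-1}\rrbracket$-algebra homomorphism applying $\partial_{X_m}$ to coefficients, and it fixes $P_{m-1}$ because $P_{m-1}$ has coefficients in $k$ and $\partial$ is an HS-derivation over $k$, which is in fact exactly what your displayed computation uses.
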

\begin{proof}
Let $F_2$ denote $F(X_1,X_2)$ and for $m\geqslant 2$ let
$$F_{m+1}=F_m(X_1,\ldots,X_{m-1},F(X_m,X_{m+1})).$$
By the definitions of $E_{(m)}$ and $[m]_F$, it is enough to prove the following.
\\
{\bf Claim}
\\
For each $m\geqslant 2$, the following diagram is commutative
\begin{equation*}
  \xymatrix{R \ar[r]^{E_1\ \ \ } \ar[rrd]_{E_1}& R\llbracket X_1\rrbracket\ar[r]^{E_2\ \ }& R\llbracket X_1,X_2\rrbracket \ar[r]^{\ \ \ \ E_3}& \ldots \ar[r]^{E_{m}\ \ \ \ \ \ \ \ } &
  R\llbracket X_1,\ldots,X_{m}\rrbracket  \\
    &  & R\llbracket X_1\rrbracket .\ar[rru]_{\evv_{F_m}} &  &  }
\end{equation*}
\noindent
\emph{Proof of Claim.} Induction on $m$. For $m=2$ the diagram above
is the $F$-iterativity diagram (Definition \ref{defefiter}). Assume that $m\geqslant 2$ and that the diagram above is commutative. Since
$$\evv_{(X_1,\ldots,X_{m-1},F(X_m,X_{m+1}))}\circ \evv_{F_m} = \evv_{F_{m+1}},$$
the commutativity of the corresponding diagram for $m+1$ follows from the commutativity of the following diagram
\begin{equation*}
  \xymatrix{R\llbracket X_1,\ldots,X_{m-1}\rrbracket \ar[rrrr]^{E_m} \ar[d]^{E_m}&   &  & & R\llbracket X_1,\ldots,X_m\rrbracket \ar[d]^{E_{m+1}} \\
  R\llbracket X_1,\ldots,X_{m}\rrbracket \ar[rrrr]^{\evv_{(X_1,\ldots,X_{m-1},F(X_m,X_{m+1}))}\ \ \ } &   &  & & R\llbracket X_1,\ldots,X_{m+1}\rrbracket .}
\end{equation*}
The commutativity of this last diagram diagram follows from the application of the functor
$$S\mapsto S\llbracket X_1,\ldots,X_{m-1}\rrbracket$$
to the $F$-iterativity diagram after setting $X=X_{m+1}$ and $Y=Z=X_m$.
\end{proof}
\begin{remark}\label{gapower}
\begin{enumerate}
\item Note that for a $\ga$-derivation $\partial$, Proposition \ref{composing} immediately implies that $\partial^{(p)}=(\id,0,0,\ldots)$.

\item If $\partial'$ is an HS-derivation such that $(\partial')_X=E_{(m)}$, then by Remark \ref{pullpusht} and Proposition \ref{composing}, $\partial'$ is $F$-iterative as well (since $[m]_{F}$ is an endomorphism of the formal group law $F$).
\end{enumerate}
\end{remark}
\begin{prop}\label{pthpower}
If $\partial$ is an $F$-derivation, then there is $W\in Xk\llbracket X\rrbracket$ such that the following diagram commutes
\begin{equation*}
  \xymatrix{
R  \ar[rr]^{\partial_X^{(p)}} \ar[rd]_{\partial_X} &  & R\llbracket X\rrbracket\\
 &  R\llbracket X\rrbracket .\ar[ru]_{\evv_W} & }
\end{equation*}
\end{prop}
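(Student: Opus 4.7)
The plan is to combine Proposition~\ref{composing} at $m=p$ with Lemma~\ref{epe}, and then extract $W$ from the multiplication-by-$p$ series $[p]_F$ after showing that in characteristic $p$ it factors through Frobenius. Let $\sigma\colon R\llbracket X\rrbracket\to R\llbracket X\rrbracket$ denote the continuous $R$-algebra substitution $g(X)\mapsto g(X^p)$; its image is $R\llbracket X^p\rrbracket$, so $\sigma$ is injective. Lemma~\ref{epe} rewrites as $E_{(p)}=\sigma\circ\partial_X^{(p)}$, while Proposition~\ref{composing} at $m=p$ gives $E_{(p)}=\ev_{[p]_F}\circ\partial_X$, so that
\[
\sigma\circ\partial_X^{(p)}=\ev_{[p]_F}\circ\partial_X.
\]

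The hard part will be producing $W\in Xk\llbracket X\rrbracket$ with $[p]_F(X)=W(X^p)$. For this I would differentiate the endomorphism identity $[p]_F(F(X,Y))=F([p]_F(X),[p]_F(Y))$ with respect to $Y$ and specialize at $Y=0$. Using $F(X,0)=X$, $[p]_F(0)=0$ and $[p]_F'(0)=p\cdot 1=0$ in characteristic $p$, this collapses to
\[
[p]_F'(X)\cdot\frac{\partial F}{\partial Y}(X,0)=0.
\]
Since $\frac{\partial F}{\partial Y}(X,0)\equiv 1\pmod X$ is a unit in $k\llbracket X\rrbracket$, we conclude $[p]_F'(X)=0$, which in characteristic $p$ forces $[p]_F\in k\llbracket X^p\rrbracket$. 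Setting $W$ to be the unique series with $W(X^p)=[p]_F(X)$ gives $W\in Xk\llbracket X\rrbracket$ (the constant term vanishes because $[p]_F(0)=0$).

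Finally, the continuous $R$-algebra endomorphisms $\ev_{[p]_F}$ and $\sigma\circ\ev_W$ of $R\llbracket X\rrbracket$ both send $X$ to $W(X^p)$, so they coincide. Substituting into the displayed equation yields $\sigma\circ\partial_X^{(p)}=\sigma\circ\ev_W\circ\partial_X$, and cancelling the injective map $\sigma$ gives $\partial_X^{(p)}=\ev_W\circ\partial_X$, which is exactly the commutativity of the triangle in the statement. Every ingredient besides the Frobenius factorization of $[p]_F$ is pure bookkeeping with diagrams already established in the paper.
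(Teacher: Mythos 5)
Your argument is correct, and its skeleton is the same as the paper's: combine Lemma~\ref{epe} with Proposition~\ref{composing} at $m=p$ to get $\sigma\circ\partial_X^{(p)}=\ev_{[p]_F}\circ\partial_X$, factor $[p]_F$ through Frobenius, and cancel the injective substitution $X\mapsto X^p$. There are two points of genuine divergence worth recording. First, where the paper simply cites Silverman (Example IV.7.1) or Demazure for the existence of $V$ with $[p]_F(X)=V(X^p)$, you re-derive it by differentiating the endomorphism identity $[p]_F(F(X,Y))=F([p]_F(X),[p]_F(Y))$ at $Y=0$; this is the standard proof of that fact and it checks out (note it silently uses that $[p]_F$ is an endomorphism, which requires the commutativity of $F$ -- available here by the Hazewinkel result quoted at the start of Section~\ref{secalgprop} -- and that the linear term of $[p]_F$ is $pX=0$). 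Making the argument self-contained is a small gain. Second, your $W$ is the series with $W(X^p)=[p]_F(X)$ (the paper's $V$), whereas the paper sets $W=V^{\fr^{-1}}$, i.e.\ the series with $W(X)^p=[p]_F(X)$. Your identification is verified cleanly by the observation that $\ev_{[p]_F}$ and $\sigma\circ\ev_W$ are continuous $R$-algebra endomorphisms agreeing on $X$, after which cancelling $\sigma$ lands exactly on the stated triangle; the two choices of $W$ coincide whenever $V$ has coefficients in $\Ff_p$, which covers $\gaf$ and $\gmf$ and hence every later use of the proposition, but your bookkeeping is the more transparent of the two and avoids the Frobenius twist entirely.
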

\begin{proof}
From the displayed formula in the proof of Corollary \ref{ppowerder} and from Proposition \ref{composing} we get the following commutative diagram.
\begin{equation*}
  \xymatrix{
R\llbracket X\rrbracket  \ar[rr]^{\ev_{X^p}} &  & R\llbracket X^p\rrbracket \ar[rrd]^{\iota} & & \\
R \ar[u]^{\partial_X^{(p)}} \ar[rru]^{\partial_{X^p}^{(p)}} \ar[rrrr]^{E_{(p)}} \ar[rrd]_{\partial_X} & & & & R\llbracket X\rrbracket  \\
 &  & R\llbracket X\rrbracket \ar[rru]_{\ev_{[p]_F}} & & }
\end{equation*}
\noindent
By \cite[Example IV.7.1]{Si} (or Proposition on page 29 of \cite{Demazure}), there is $V\in Xk\llbracket X\rrbracket$ such that $[p]_{F}(X)=V(X^p)$. Thus we have the second commutative diagram (understanding now $\ev_{X^p}$ as a function from $R\llbracket X\rrbracket$ to itself).
\begin{equation*}
  \xymatrix{
R\llbracket X\rrbracket  \ar[rrd]^{\ev_{X^p}} &  & \\
R \ar[u]^{\partial_X^{(p)}} \ar[rr]^{E_{(p)}} \ar[d]_{\partial_X} & & R\llbracket X\rrbracket  \\
R\llbracket X\rrbracket \ar[rr]^{\ev_{X^p}} & & R\llbracket X\rrbracket \ar[u]_{\ev_V}}
\end{equation*}
\noindent
We define the power series $W$ as $V^{\fr^{-1}}$. We clearly have $V(X^p)=W^p$ and we get the third commutative diagram.
\begin{equation*}
  \xymatrix{
R \ar[rr]^{\partial_X^{(p)}} \ar[d]_{\partial_X} & & R\llbracket X\rrbracket \ar[rrd]^{\ev_{X^p}} & &  \\
R\llbracket X\rrbracket \ar[rr]^{\ev_{W}} & & R\llbracket X\rrbracket \ar[rr]^{\ev_{X^p}} & &  R\llbracket X\rrbracket }
\end{equation*}
\noindent
Since $\ev_{X^p}$ is a monomorphism, the result follows.
\end{proof}
\begin{remark}\label{gmpower}
\begin{enumerate}
\item The power series $W$ from the statement of Proposition \ref{pthpower} coincides with $(V_F)^{\fr^{-1}}$, where $V_F$ is the power series corresponding to the \emph{Verschiebung morphism} (see e.g. page 28 of \cite{Demazure}).

\item Note that for $F=X_1+X_2+X_1X_2$, the proposition above immediately implies that $\partial^{(p)}=\partial$, since in this case $W=V=X$ (it also follows from the description of the restricted Lie algebra action in \cite[page 129]{Tyc}).

\item For (additively) iterative HS-derivations if $i,j<p^m$ and $i+j\geqslant p^m$ then $\partial_{i}\circ \partial_{j}=0$. It is not true for other types of iterativity, e.g. $\partial_1\circ \partial_1=\partial_1$ for $p=2$ and a multiplicatively iterative HS-derivation $\partial=(\partial_i)_{i\in \Nn}$.

\item Proposition \ref{pthpower} remains true if we replace $F$-derivations with $f$-derivations for an $m$-truncated group law $f$, just by replacing the ring $R\llbracket X\rrbracket$ with $R[X]/(X^{p^m})$ and using \cite[Proposition 1.4]{Manin} instead of \cite[Example IV.7.1]{Si}.
\end{enumerate}
\end{remark}
Let us fix $q=p^m$. If $\partial=(\partial)_{i<q}$ is an $f$-derivation, then in general it is difficult to give formulas for $\partial_j\circ \partial_i$. We show below that the general $f$-iterativity rule resembles the standard (additive) one up to the lower order terms.
\begin{lemma}\label{approx}
Let $\partial$ be an $f$-derivation on $R$. For every positive integers $i,j$ such that $i+j<q$ we have
\begin{equation*}
 \partial_j\circ \partial_i={i+j\choose i}\partial_{i+j}+\mathcal{O}(\partial_{<i+j}),
\end{equation*}
 where $\mathcal{O}(\partial_{<i+j})$ is a
$k$-linear combination of the maps $\partial_1,\ldots,\partial_{i+j-1}$.
\end{lemma}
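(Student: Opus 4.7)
The plan is to apply the $f$-iterativity diagram to a single element $r\in R$ and compare coefficients of $v^jw^i$. Tracing around the square both ways (right-then-down gives $\partial_v[w]\circ\partial_w$, down-then-right gives $\ev_f\circ\partial_u$) yields, in $R[v,w]=R[V,W]/(V^q,W^q)$,
$$\sum_{0\leqslant i',j'<q}\partial_{j'}(\partial_{i'}(r))\,v^{j'}w^{i'} \;=\; \sum_{0\leqslant n<q}\partial_n(r)\,f(v,w)^n.$$

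I would then exploit the fact that the truncated group law axioms $f(v,0)=v=f(0,v)$ force $f(v,w)=v+w+g(v,w)$ with $g\in(vw)\cdot k[v,w]$; in particular every monomial of $g$ has total degree at least $2$. It follows that every monomial of $f(v,w)^n$ has total degree at least $n$ and, modulo terms of total degree strictly greater than $n$, one has $f(v,w)^n\equiv(v+w)^n$.

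Fixing $i,j\geqslant 1$ with $i+j<q$ and extracting the coefficient of $v^jw^i$ from both sides (the truncation $(V^q,W^q)$ plays no role here since $i,j<q$), one finds on the right: summands with $n>i+j$ contribute $0$ by the degree bound; the summand $n=i+j$ contributes $\binom{i+j}{i}\partial_{i+j}(r)$ from the leading $(v+w)^{i+j}$; the summand $n=0$ contributes $0$ because $f^0=1$ and $i,j\geqslant 1$; and each summand with $1\leqslant n<i+j$ contributes $c^n_{ij}\,\partial_n(r)$ for some scalar $c^n_{ij}\in k$ depending only on $f$. The left side just reads off $\partial_j(\partial_i(r))$, so the desired identity
$$\partial_j\circ\partial_i \;=\; \binom{i+j}{i}\partial_{i+j}+\sum_{n=1}^{i+j-1}c^n_{ij}\,\partial_n$$
drops out, with the $\mathcal{O}(\partial_{<i+j})$ term being the stated $k$-linear combination.

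There is no genuine obstacle here; the entire content of the argument is the observation that, as a truncated group law, $f$ agrees with the additive law $v+w$ up to total degree $1$, so all ``lower order'' corrections to the classical Leibniz-type formula $\partial_j\circ\partial_i=\binom{i+j}{i}\partial_{i+j}$ are encoded in the higher-degree tail $g(v,w)$ of $f$ and automatically involve only $\partial_n$ with $n<i+j$.
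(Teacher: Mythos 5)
Your proof is correct and follows the same route as the paper: apply the $f$-iterativity diagram to $r$, write $f=v+w+g$ with $g$ divisible by $vw$ (forced by $f(v,0)=v=f(0,v)$), and compare coefficients of the degree-$(i+j)$ monomial, so that $f^n$ for $n>i+j$ contributes nothing and $f^{i+j}$ contributes only its additive leading term $\binom{i+j}{i}$. The paper's proof is just a terser version of exactly this argument.
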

\begin{proof}
It follows from the $f$-iterativity diagram (Definition \ref{deffitertrun}) that for any $r\in R$ we have:
$$\sum_{i,j<q}\partial_j(\partial_i(r))v^iw^j=\sum_{n<q}\partial_n(r)f(v,w)^n.$$
Since $f$ is a truncated group law, $f=v+w+s$ for some $s\in k[v,w]$ divisible by $vw$.
Therefore we get
$$\partial_j(\partial_i(x))={i+j\choose i}\partial_{i+j}(x)+\mathcal{O}(\partial_{<i+j})(x)$$
proving the required equality.
\end{proof}
\begin{remark}\label{constcoinc}
\begin{enumerate}
\item Since for a formal group law $F$, $F[m]$ is an $m$-truncated group law, the above lemma is also true for $F$-derivations (with no restrictions for $i,j$).

\item Let $\partial$ be an $f$-derivation on $R$. Using Lemma \ref{approx}, we can find $\alpha\in k$ such that
$$\partial_1\circ \partial_1=\partial_2+\alpha\partial_1.$$
For each $0<n<p-1$, we inductively obtain that $\partial_n$ is a $k$-linear combination of non-zero compositional powers of $\partial_1$. Therefore we have
$$\ker(\partial_1)=\ker(\partial_1)\cap \ldots \cap \ker(\partial_{p-1}).$$
\end{enumerate}
\end{remark}
\begin{example}\label{multiter}
Assume that $\partial$ is a $\gm$-derivation and $k,l\in \Nn$. For any $r\in R$ we have
\begin{IEEEeqnarray*}{rCl}
\sum_{i,j}\partial_j(\partial_i(r))X^iY^j
&=&
\sum_n\partial_n(r)(X+Y+XY)^n
\\ &=&
\sum_n\partial_n(r)
\sum\limits_{a+b\leqslant n} \frac{n!}{a!b!(n-a-b)!}X^aY^b(XY)^{n-a-b}
\\ &=&
\sum_{i,j}\left(\sum\limits_{l=0}^{\min(i,j)} \frac{(i+j-l)!}{(j-l)!l!(i-l)!}\partial_{i+j-l}(r)\right)X^iY^j.
\end{IEEEeqnarray*}
Therefore we get the following multiplicative iterativity rule
\begin{equation*}
\partial_j\circ \partial_i=\sum\limits_{n=\max(i,j)}^{i+j}\frac{n!}{(n-i)!(n-j)!(i+j-n)!}\partial_{n},
\end{equation*}
which recovers the formula from \cite[Def. 11]{cr98}.
\end{example}
\noindent
The following lemma will help to find ``canonical elements'' for certain HS-derivations.
\begin{lemma}\label{ppowers}
Let $\partial, \partial^{\prime}$ be $f$-iterative derivations on $R$.
 If for every $k<m$ we have $\partial_{p^k}=\partial_{p^k}^{\prime}$, then for any $n<q$ we have $\partial_n=\partial^{\prime}_n$.
\end{lemma}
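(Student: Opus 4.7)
The plan is to prove the equality $\partial_n=\partial_n'$ by strong induction on $n<q$. The case $n=0$ is immediate since $\partial_0=\id=\partial_0'$, and when $n=p^k$ for some $k<m$ the equality holds by hypothesis; so I may assume $n<q$ is not a power of $p$ and that $\partial_l=\partial_l'$ for every $l<n$. The core idea is to solve for $\partial_n$ using the $f$-iterativity rule from Lemma \ref{approx}: for any $i,j>0$ with $i+j=n$,
$$\partial_j\circ \partial_i= {n\choose i}\partial_n + \mathcal{O}(\partial_{<n}),$$
where, inspecting the proof of Lemma \ref{approx}, both the leading binomial coefficient and the scalars hidden in $\mathcal{O}(\partial_{<n})$ are read off from the expansion of $f(v,w)^n$ and therefore depend only on $f$ and on $(n,i,j)$, not on the particular $f$-derivation. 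Hence if I can pick such $i,j$ with ${n\choose i}$ nonzero in $k$ (equivalently, not divisible by $p$), the inductive hypothesis applied to $\partial_i,\partial_j$ and to all $\partial_l$ with $l<n$ forces the right-hand side to agree for $\partial$ and $\partial'$; dividing by the unit ${n\choose i}\in k^\times$ yields $\partial_n=\partial_n'$.

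The only combinatorial point is the existence of such a decomposition whenever $n<q$ is not a power of $p$. Write $n=\sum_k n_kp^k$ in base $p$. If at least two of the digits $n_k$ are nonzero, split them between $i$ and $j$ so that both are positive; by Lucas's theorem ${n\choose i}\equiv \prod_k{n_k\choose i_k}\pmod p$, and each factor is nonzero since $i_k\leqslant n_k<p$. Otherwise $n=cp^k$ with $2\leqslant c<p$, and taking $i=p^k$, $j=(c-1)p^k$ gives ${n\choose i}\equiv c\not\equiv 0\pmod p$, again by Lucas. This completes the inductive step. I do not expect any real obstacle beyond locating this decomposition; the whole argument is a triangular-system observation, expressing each $\partial_n$ ($n<q$) as a universal $k$-polynomial (depending only on $f$) in the $p$-power terms $\partial_{p^0},\ldots,\partial_{p^{m-1}}$.
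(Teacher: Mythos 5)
Your argument is correct and is essentially the paper's own proof: strong induction on $n$, Lemma \ref{approx}, the observation that the scalars in $\mathcal{O}(\partial_{<n})$ depend only on $f$, and isolating $\partial_n$ from a decomposition $n=i+j$ with ${n \choose i}$ prime to $p$. The only (immaterial) difference is that the paper skips your two-case digit analysis by always taking $i=p^k$ with $p^k$ the largest power of $p$ with $p^k\leqslant n$, for which Lucas gives ${n \choose i}\equiv n_k\not\equiv 0 \pmod{p}$ in one stroke.
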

\begin{proof}
Induction on $n$. Take $n<q$ and assume that for all $k<n$ we have $\partial_k=\partial'_k$. Let $k<m$ be biggest such that $p^k\leqslant n$. By the assumption we can assume that $n=p^k+l$ for some positive integer $l$. Then $p$ does not divide ${n\choose l}$. Take any $r\in R$, by Lemma \ref{approx}, we have
$$ \partial_{p^k}(\partial_l(r))={n\choose l}\partial_{n}(r)+\mathcal{O}(\partial_{<n})(r),$$
$$ \partial'_{p^k}(\partial'_l(r))={n\choose l}\partial'_{n}(r)+\mathcal{O}(\partial'_{<n})(r).$$
By the induction assumption we get $\partial_{n}(r)=\partial'_{n}(r)$.
\end{proof}
\begin{cor}\label{pvanish}
Let $\partial$ be an $f$-iterative derivation on $R$ and $r\in R$. If
 for every $k<m$ we have $\partial_{p^k}(r)=0$, then for any $n<q$ we have $\partial_n(r)=0$.
\end{cor}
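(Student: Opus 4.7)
The plan is to mimic the proof of Lemma \ref{ppowers} but to track equalities of values at the single element $r$ rather than equalities of maps. The induction will be on $n$, for $0 < n < q$. Whenever $n$ is of the form $p^k$ with $k < m$, the conclusion $\partial_n(r) = 0$ is supplied directly by the hypothesis; in particular this handles the base case $n = 1 = p^0$.

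For the inductive step at an $n$ that is not such a power of $p$, suppose $\partial_{n'}(r) = 0$ for all $0 < n' < n$. Following Lemma \ref{ppowers}, I would choose $k$ to be the largest integer with $p^k \leqslant n$; since $n < p^m$ this gives $k < m$, and since $n$ is not a power of $p$ the integer $l = n - p^k$ satisfies $0 < l < n$. Applying Lemma \ref{approx} with $i = l$ and $j = p^k$ (so $i + j = n < q$) and evaluating at $r$ gives
$$\partial_{p^k}(\partial_l(r)) = {n \choose l}\partial_n(r) + \sum_{s=1}^{n-1}c_s\partial_s(r)$$
for some scalars $c_s \in k$. By the inductive hypothesis $\partial_l(r) = 0$, so the left-hand side is $\partial_{p^k}(0) = 0$; likewise each $\partial_s(r)$ in the sum on the right vanishes. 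Hence ${n \choose l}\partial_n(r) = 0$, and since ${n \choose l} \not\equiv 0 \pmod{p}$ (as already verified in the proof of Lemma \ref{ppowers}), we conclude $\partial_n(r) = 0$.

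I do not anticipate any real obstacle here. Lemma \ref{approx} is an equality of operators, so it specializes to any point, and the inductive argument of Lemma \ref{ppowers} only uses the vanishing of the earlier $\partial_s$ on the specific argument $r$ in each of the two places they appear (the inner application $\partial_l(r)$ and the linear combination $\sum c_s \partial_s(r)$). This is exactly the information the hypothesis and the inductive assumption supply, so the original proof transports almost verbatim.
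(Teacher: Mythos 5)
Your proof is correct and is exactly the argument the paper intends: the corollary is stated without proof as an immediate adaptation of the induction in Lemma \ref{ppowers}, replacing equality of maps by vanishing at the fixed element $r$, and your pointwise version of that induction (using Lemma \ref{approx}, additivity of $\partial_{p^k}$, and the non-vanishing of $\binom{n}{l}$ mod $p$) is precisely how it goes through.
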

\begin{remark}
\begin{enumerate}
\item Similarly as before, the appropriate versions of \ref{ppowers} and \ref{pvanish} for $F$-derivations are also true.
\item Similar formulas to the ones appearing in the proof of Lemma \ref{ppowers} (in the case of the additive iterativity rule) can be found at the top of page 175 of \cite{ResMat}.
\end{enumerate}
\end{remark}

\subsection{Fields of constants}
In this subsection we generalize a result of Matsumura \cite[Theorem 27.3]{mat} saying that the field of constants $C$ of a non-zero derivation $\partial$ on a field $K$ such that $\partial^{(p)}=0$ is largest possible i.e. $[K:C]=p$. If $\partial$ is a truncated HS-derivation or an HS-derivation, then we define its \emph{field of constants} as the intersection of $\ker(\partial_i)$ for all $i\neq 0$. By Remark \ref{oneiterat}, having a derivation $\partial$ such that $\partial^{(p)}=0$ is equivalent to having a $\ga[1]$-derivation $(\partial_i)_{i<p}$ and by Remark \ref{constcoinc}(2), both fields of constants coincide. In Proposition \ref{degree}, we generalize Matsumura's result  from $\ga[1]$-derivations to $f$-derivations, where $f$ is an arbitrary truncated group law. Such a generalization will be necessary for integrating truncated HS-derivations.
\\
\\
We will need a slight generalization of a result from \cite{mat} (recall that $q=p^m$).
\begin{lemma}\label{indepenendent}
Let $\partial$ be a non-zero derivation on a field $K$ of characteristic $p$. Then the functions $\id_K^q,\partial^{q},\ldots,(\partial^{(p-1)})^{q}$ are linearly independent over $K$.
\end{lemma}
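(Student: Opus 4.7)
The strategy is to reduce to the classical $K$-linear independence of the iterates $\id_K,\partial,\partial^{(2)},\ldots,\partial^{(p-1)}$ themselves (the result in \cite{mat} that the lemma generalizes), by means of a Frobenius trick. The classical input --- that no non-trivial $K$-linear relation holds among the first $p$ iterates of a non-zero derivation in characteristic $p$ --- is standard: a minimal relation $\sum_{i\leqslant n}a_i\partial^{(i)}=0$ with $a_n=1$ can be applied to $xy$ and compared with $x$ times itself applied to $y$; a brief Leibniz calculation shows the difference is a relation of length $<n$ whose coefficient at $\partial^{(n-1)}(y)$ is $n\partial(x)$, so minimality forces $n\partial=0$, hence $p\mid n$ and in particular $n\geqslant p$.

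Next, I would upgrade this to independence over any field extension $L/K$, regarding the iterates as functions $K\to L$. If $\sum_{i<p}b_i\partial^{(i)}(x)=0$ holds in $L$ for every $x\in K$, with $b_i\in L$, then I choose a $K$-basis $\{e_\lambda\}_\lambda$ of $L$ and write $b_i=\sum_\lambda b_{i,\lambda}e_\lambda$ with $b_{i,\lambda}\in K$. Matching $e_\lambda$-components yields, for each $\lambda$, a $K$-linear relation $\sum_{i<p}b_{i,\lambda}\partial^{(i)}=0$; by the classical step every $b_{i,\lambda}$ vanishes, and so does every $b_i$.

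The Frobenius trick then closes the proof. Suppose $\sum_{i=0}^{p-1}a_i(\partial^{(i)}(x))^q=0$ for every $x\in K$, with $a_i\in K$. Work inside the purely inseparable extension $L:=K^{1/q}$ and set $b_i:=a_i^{1/q}\in L$. Since $y\mapsto y^q$ is an additive ring homomorphism in characteristic $p$,
\[
\Bigl(\sum_{i=0}^{p-1}b_i\partial^{(i)}(x)\Bigr)^{\!q}=\sum_{i=0}^{p-1}b_i^q\bigl(\partial^{(i)}(x)\bigr)^{q}=\sum_{i=0}^{p-1}a_i\bigl(\partial^{(i)}(x)\bigr)^{q}=0,
\]
so $\sum_{i=0}^{p-1}b_i\partial^{(i)}(x)=0$ in $L$ for every $x\in K$. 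By the previous step each $b_i$, and hence each $a_i=b_i^q$, vanishes.

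I do not expect any real obstacle: the whole substance of the ``slight generalization'' is the Frobenius trick in the last paragraph, which is legitimate precisely because $K$ has characteristic $p$. Notably, no extension of the derivation $\partial$ itself to $K^{1/q}$ is required --- only the coefficients $b_i$ live outside $K$, while $\partial$ is always applied to elements of $K$.
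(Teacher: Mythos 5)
Your proof is correct and follows essentially the same route as the paper: reduce to the classical independence of $\id_K,\partial,\ldots,\partial^{(p-1)}$ (\cite[Theorem 25.4]{mat}), upgrade it to independence with coefficients in $K^{1/q}$, and then transport a putative relation among the $(\partial^{(i)})^q$ to one over $K^{1/q}$ by taking $q$-th roots of the coefficients. The paper's proof is just a terser statement of exactly these steps (your explicit basis argument for the extension-of-scalars step and your sketch of Jacobson's lemma are extra detail, not a different method).
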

\begin{proof} The case of $m=0$ is exactly \cite[Theorem 25.4]{mat}. For the general case it is enough to notice that the $K$-linear independence of
$\id_K,\partial,\ldots,\partial^{(p-1)}$ implies the $K^{q^{-1}}$-linear independence of $\iota ,\iota \circ\partial,\ldots,\iota \circ\partial^{(p-1)}$, where $\iota:K\to K^{q^{-1}}$ is the inclusion map. Applying the $m$-th power of the Frobenius map we get the independence of  $\id_K^q,\partial^{q},\ldots,(\partial^{(p-1)})^{q}$ over $K$.
\end{proof}
\noindent
The result below is a multiplicative version of a part of \cite[Theorem 27.3(ii)]{mat}.
\begin{lemma}\label{dxisx}
Let $\partial$ be a non-zero derivation on a field $K$ such that $\partial^{(p)}=\partial$. Then there is a non-zero $x\in K$ such that $\partial(x)=x$.
\end{lemma}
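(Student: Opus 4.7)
The main idea is to diagonalise the $\Ff_p$-linear operator $\partial$ on $K$ by exploiting the identity $\partial^{(p)} = \partial$. Over $\Ff_p$ one has $T^p - T = \prod_{a \in \Ff_p}(T-a)$, so the hypothesis becomes the operator identity
$$\prod_{a \in \Ff_p}(\partial - a\cdot\id_K) = 0.$$
For each $a \in \Ff_p$ set $K_a := \{x \in K : \partial(x)=ax\}$; then $K_0$ is the field of constants of $\partial$ and the lemma asks for a non-zero element of $K_1$.

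The first main step is the eigenspace decomposition, obtained via Lagrange interpolation. Define
$$e_a := \prod_{b \in \Ff_p \setminus \{a\}} \frac{\partial - b\cdot\id_K}{a-b},$$
and check directly that $e_a$ has image in $K_a$ (because $(\partial - a)e_a$ is the zero operator), acts as the identity on $K_a$, and annihilates $K_b$ for $b \ne a$. The polynomial identity $\sum_a \prod_{b \ne a}(T-b)/(a-b) = 1$ then gives $\sum_a e_a = \id_K$. This yields the $K_0$-vector space decomposition $K = \bigoplus_{a \in \Ff_p} K_a$, and the Leibniz rule gives $K_a \cdot K_b \subseteq K_{a+b}$, making $K$ an $\Ff_p$-graded $K_0$-algebra.

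To finish, observe that since $\partial \ne 0$ we have $K_0 \ne K$, so $K_a \ne 0$ for some $a \in \Ff_p^\times$. Pick any non-zero $y \in K_a$; by the grading, $y^n \in K_{na}$ for every $n \geqslant 1$. Choosing $n$ with $na \equiv 1 \pmod p$ gives $x := y^n \in K_1$, and $x \ne 0$ since $K$ is a field, proving the claim. The only conceptual obstacle is the diagonalisation step itself, which must be carried out without assuming $[K : K_0]$ to be finite; but since the minimal polynomial of $\partial$ divides the separable polynomial $T^p - T$, the explicit idempotents $e_a$ produce the decomposition cleanly regardless of dimension.
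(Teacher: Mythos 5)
Your proof is correct, and it takes a genuinely different route from the paper's. The paper argues by contradiction: if $\partial(x)\neq x$ for all non-zero $x$, then $\partial-\id_K$ is injective, and from $(\partial-\id_K)\circ(\id_K+\partial+\ldots+\partial^{(p-1)})=\partial^{(p)}-\id_K=\partial-\id_K$ one cancels to get $\partial+\ldots+\partial^{(p-1)}=0$, contradicting the $K$-linear independence of $\id_K,\partial,\ldots,\partial^{(p-1)}$ (Matsumura's Theorem 25.4, a Jacobson/Artin-type independence theorem, which is thus an essential external input). You instead diagonalise $\partial$ directly: since $T^p-T=\prod_{a\in\Ff_p}(T-a)$ is separable, the Lagrange idempotents $e_a$ give the eigenspace decomposition $K=\bigoplus_{a\in\Ff_p}K_a$ without any finite-dimensionality assumption, the Leibniz rule makes this an $\Ff_p$-grading, and a suitable power of any non-zero homogeneous element of non-zero degree lands in $K_1$. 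Each step checks out (in particular $y^n\neq 0$ because $K$ is a field, and $K\neq K_0$ because $\partial\neq 0$). What your approach buys is self-containedness and extra structure: note that your projector $e_a$ equals $-\sum_{i=1}^{p-1}a^{p-1-i}\partial^{(i)}$, so the elements $\alpha_k$ appearing in the paper's Lemma 3.15 and in the proof of Proposition 3.16 are exactly $-e_k(a)$; your grading therefore packages in one stroke what the paper re-derives by hand there. What the paper's proof buys is brevity, at the cost of invoking the independence theorem, which it needs elsewhere anyway.
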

\begin{proof}
 We argue by contradiction. Suppose that $\partial(x)\neq x$ for all non-zero $x\in K$.
It means that the map $\partial-\id_K$ is one-to-one.
Notice that
\begin{equation*}
 (\partial-\id_{K})\circ(\id_{K}+\partial+\partial^{(2)}+\ldots+\partial^{(p-1)})=\partial^{(p)}-\id_{K}=
\partial-\id_{K}.
\end{equation*}
From the injectivity of $\partial-\id_{K}$, we obtain
\begin{equation*}
 \id_{K}+\partial+\partial^{(2)}+\ldots+\partial^{(p-1)}=\id_{K},
\end{equation*}
which contradicts \cite[Theorem 25.4]{mat} (see Lemma \ref{indepenendent} above for $q=1$).
\end{proof}
\begin{remark}
Such an element $x$ is also given by \cite[Lemma 3.5(3)]{Tyc}.
\end{remark}
\noindent
Before the next result, we need a very general lemma, which is not particularly related to derivations.
\begin{lemma}\label{general}
Let $\partial:R\to R$ be a function.
\begin{enumerate}
\item  We have
$$\partial^{(p-1)}+\sum\limits_{l=1}^{p-1}\sum\limits_{i=1}^{p-1}l^{p-1-i}\partial^{(i)}=0.$$
\item Let $l\in \Nn$ and assume that $\partial$ is additive and $\partial^{(p)}=\partial$. Then
$$  \partial\Big(\sum\limits_{i=1}^{p-1}l^{p-1-i}\partial^{(i)}\Big)=
	  l\Big(\sum\limits_{i=1}^{p-1}l^{p-1-i}\partial^{(i)}\Big).$$
\end{enumerate}
\end{lemma}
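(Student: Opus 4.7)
The plan is to handle both parts by direct calculation; both statements are arithmetic identities in $\Ff_p$ applied to the compositional-power functions $\partial^{(i)}$.

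For (1), the key step is to swap the order of summation and isolate the coefficient of each $\partial^{(i)}$. That coefficient is the classical power sum $\sum_{l=1}^{p-1} l^{p-1-i}$ evaluated in $\Ff_p$. Using the standard fact that $\sum_{l=1}^{p-1} l^{s}$ equals $-1$ in $\Ff_p$ when $(p-1)\mid s$ and is $0$ otherwise, and noting that $p-1-i$ ranges over $\{0,1,\ldots,p-2\}$ as $i$ runs from $1$ to $p-1$, only the exponent $0$ hits the divisibility condition. Hence only the $i=p-1$ term survives, contributing $-\partial^{(p-1)}$, which exactly cancels the $\partial^{(p-1)}$ sitting outside the double sum.

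For (2), additivity of $\partial$ lets one commute it past the sum and the scalars $l^{p-1-i}$, so each $\partial \circ \partial^{(i)}$ becomes $\partial^{(i+1)}$. After reindexing by $j=i+1$ and using the hypothesis $\partial^{(p)}=\partial$ to fold the top term ($j=p$) back down to $\partial$, the resulting expression differs from $l$ times the original sum only in the extreme-order terms. The middle sums $\sum_{j=2}^{p-1} l^{p-j}\partial^{(j)}$ on the two sides are identical and cancel, leaving only $(1 - l^{p-1})\partial$, which vanishes in characteristic $p$ by Fermat's little theorem $l^{p-1}\equiv 1 \pmod p$.

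There is no serious obstacle here; both parts are formal manipulations whose only inputs are a power-sum identity in $\Ff_p$ for (1) and Fermat's little theorem for (2). The one point worth flagging is that the identity in (2) requires $p\nmid l$ in order for $l^{p-1}=1$ in $\Ff_p$; this is ensured by the context in which the lemma is applied (notably in part (1), where the outer index runs precisely over $l=1,\ldots,p-1$).
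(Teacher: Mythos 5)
Your proof is correct and follows essentially the same route as the paper: for (1) the paper also swaps the summation order and reduces to the vanishing of the power sums $\sum_{l=1}^{p-1}l^{s}$ for $0<s<p-1$ (which it derives via a generator of $\Ff_p^{*}$ rather than citing it), and for (2) the paper leaves exactly your shift-and-fold computation to the reader. Your observation that (2) genuinely requires $p\nmid l$ (otherwise the identity reduces to $\partial=0$) is a correct and worthwhile refinement of the lemma as stated, and is indeed harmless since the lemma is only invoked with $l\in\{1,\ldots,p-1\}$ in the proof of Proposition \ref{degree}.
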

\begin{proof}
Note that for any $\alpha\in \Ff_p$, since $\alpha^p=\alpha$, we get
\begin{equation}
\sum_{r=1}^{p-1}\alpha^r=\alpha(\sum_{r=1}^{p-1}\alpha^r).\tag{$*$}
\end{equation}
Let $\xi$ be a generator of $\Ff_p^*$ and $i\in \{1,\ldots,p-2\}$. Then we have
\begin{equation}
 \sum\limits_{l=1}^{p-1}l^{i}=\sum\limits_{r=1}^{p-1}(\xi^r)^i=\sum\limits_{r=1}^{p-1}(\xi^i)^r=0,\tag{$**$}
\end{equation}
where the last equality follows from $(*)$ (for $\alpha=\xi^i$), since $\xi^i\neq 1$.
\\
Using $(**)$ we get
\begin{IEEEeqnarray*}{rCl}
\partial^{(p-1)}+\sum\limits_{l=1}^{p-1}\sum\limits_{i=1}^{p-1}l^{p-1-i}\partial^{(i)}
&=&
\partial^{(p-1)}+\sum\limits_{i=1}^{p-1}(\sum\limits_{l=1}^{p-1}l^{p-1-i})\partial^{(i)}
\\ &=&
\partial^{(p-1)}+\sum_{l=1}^{p-1}l^{0}\partial^{(p-1)}
\\ &=&  0.
\end{IEEEeqnarray*}
The proof of $(2)$ is an easy computation which is similar to the one needed to obtain the equality $(*)$ above.
\end{proof}
\noindent
We prove now the main result of this subsection. Recall that $f$ is a fixed $m$-truncated group law.
\begin{prop}\label{degree}
Let $K$ be a field and $\partial$ be an $f$-derivation on $K$ such that $\partial_1$ is non-zero. Let $C$ be the constant field of $\partial$. Then $[K:C]=p^m$.
\end{prop}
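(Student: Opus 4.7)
The plan is to proceed by induction on $m$. For the base case $m=1$, $\partial_1$ is a non-zero derivation on $K$ with $C = \ker \partial_1$ by Remark \ref{constcoinc}(2). By the truncated version of Proposition \ref{pthpower} (see Remark \ref{gmpower}(4)), there is $W \in X k[X]/(X^p)$ with $\partial_X^{(p)} = \ev_W \circ \partial_X$; comparing the coefficient of $X$ yields $\partial_1^{(p)} = c\partial_1$ for some $c \in k$. If $c = 0$, the conclusion $[K:C] = p$ is Matsumura's Theorem 27.3 in \cite{mat}. If $c \neq 0$, an adaptation of the proof of Lemma \ref{dxisx}, applied to $\partial_1 - \lambda\id$ for $\lambda \in K$ satisfying $\lambda^{p-1} = c$ and using the $K$-linear independence of $\id, \partial_1, \ldots, \partial_1^{(p-1)}$ (Lemma \ref{indepenendent}), provides some $x \in K$ with $\partial_1(x) = \lambda x$; the powers $1, x, x^2, \ldots, x^{p-1}$ are $\partial_1$-eigenvectors with distinct eigenvalues in $\Ff_p \cdot \lambda$, hence $C$-linearly independent, giving $[K:C] \geq p$. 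The matching upper bound $[K:C] \leq p$ follows from the same independence via a Jacobson--Bourbaki count.

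For the inductive step $m \geq 2$, Lemma \ref{hsfrob}(2) ensures that $\partial' = (\partial_i)_{i<p^{m-1}}$ is an $f[m-1]$-derivation with $\partial'_1 \neq 0$, so by the inductive hypothesis $[K:C'] = p^{m-1}$ for $C' = \bigcap_{0<i<p^{m-1}}\ker\partial_i$. It remains to show $[C':C] = p$ by extracting a non-trivial $1$-truncated iterative HS-derivation on $C'$ whose constant field is $C$. Since formal groups are commutative, each $\partial_n$ preserves $C'$: for $x \in C'$ and $0 < j < p^{m-1}$, $\partial_j(\partial_n(x)) = \partial_n(\partial_j(x)) = 0$. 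The key claim is that $\partial_n|_{C'} = 0$ whenever $0 < n < p^m$ is not divisible by $p^{m-1}$. For $n < p^{m-1}$ this is immediate; for $p^{m-1} < n < p^m$ with $p^{m-1} \nmid n$, write $n = ip^{m-1} + l$ with $0 < l < p^{m-1}$ and $1 \leq i \leq p-1$, and compute $0 = \partial_l \partial_{ip^{m-1}}|_{C'}(x) = \partial_{ip^{m-1}}\partial_l|_{C'}(x)$. Expanding the left-hand side through the $f$-iterativity identity $\partial_l \partial_{ip^{m-1}} = \sum_{n'} c(n', l, ip^{m-1}) \partial_{n'}$, where $c(n', l, ip^{m-1})$ is the coefficient of $v^l w^{ip^{m-1}}$ in $f(v,w)^{n'}$: for $n' = i'p^{m-1}$ divisible by $p^{m-1}$, since $f^{n'} = f^{\fr^{m-1}}(v^{p^{m-1}}, w^{p^{m-1}})^{i'}$ contains only terms of $v$-degree divisible by $p^{m-1}$, the coefficient vanishes; for $n' < n$ not divisible by $p^{m-1}$, induction on $n$ gives $\partial_{n'}|_{C'} = 0$; and $c(n, l, ip^{m-1}) = \binom{n}{l} = 1$ by Lucas' theorem. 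These combine to force $\partial_n|_{C'} = 0$.

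Finally, for any $x \in K$ with $\partial_1(x) \neq 0$, since $\partial_v$ is a ring homomorphism and $v^{p^m} = 0$, one has $\partial_v(x^{p^{m-1}}) = \partial_v(x)^{p^{m-1}} = \sum_{i<p}\partial_i(x)^{p^{m-1}} v^{ip^{m-1}}$; reading off coefficients gives $\partial_n(x^{p^{m-1}}) = 0$ for $0 < n < p^{m-1}$ (so $x^{p^{m-1}} \in C'$) and $\partial_{p^{m-1}}(x^{p^{m-1}}) = \partial_1(x)^{p^{m-1}} \neq 0$, showing $\partial_{p^{m-1}}|_{C'} \neq 0$. Lemma \ref{hsfrob}(4) now applies: $(\partial_{ip^{m-1}}|_{C'})_{i<p}$ is an $f^{\fr^{m-1}}[1]$-derivation on $C'$ with non-zero first-order term, and the base case yields $[C':C] = p$, so $[K:C] = [K:C'] \cdot [C':C] = p^{m-1} \cdot p = p^m$ closes the induction. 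The main obstacle is the inductive vanishing argument for $\partial_n|_{C'}$ with $p^{m-1} \nmid n$, whose success hinges on the Frobenius-twisted decomposition $f^{ip^{m-1}} = f^{\fr^{m-1}}(v^{p^{m-1}}, w^{p^{m-1}})^i$ available only in positive characteristic.
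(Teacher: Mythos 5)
Your inductive step is sound and is essentially a mirror image of the paper's: the paper peels off $\ker(\partial_1)$ from the bottom (getting an $f^{\fr}[m-1]$-derivation $(\partial_{pi}|_{\ker\partial_1})$ and invoking induction there), whereas you peel off the constants $C'$ of the $(m-1)$-truncation from the top and reduce the remaining layer $[C':C]$ to the base case via Lemma \ref{hsfrob}(4). Your verification that $\partial_n|_{C'}=0$ for $p^{m-1}\nmid n$ (using $f^{i'p^{m-1}}=\bigl(f^{\fr^{m-1}}(v^{p^{m-1}},w^{p^{m-1}})\bigr)^{i'}$ and Lucas' theorem) is correct and in fact more explicit than the paper's corresponding appeal to Lemma \ref{approx} and Corollary \ref{pvanish}.

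The genuine gap is in the base case $m=1$ when $c\neq 0$: you take ``$\lambda\in K$ satisfying $\lambda^{p-1}=c$'' without justification, and no such $\lambda$ need exist in $K$ (e.g.\ $p=3$, $c=2$, $K$ not containing $\Ff_9$: then $\lambda^2=2$ has no solution in $K$). Everything downstream of that choice --- the modified derivation $\lambda^{-1}\partial_1$ satisfying $(\lambda^{-1}\partial_1)^{(p)}=\lambda^{-1}\partial_1$, the eigenvector $x$, and the $C$-independence of $1,x,\ldots,x^{p-1}$ --- depends on $\lambda$ living in $K$ (equivalently in $C$, which does follow automatically once $\lambda\in K$). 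This is exactly the point where the paper has to work: it first treats $c=1$, then for general $c$ rescales by $d\in C$ with $d^{p-1}=c^{-1}$, and when no such $d$ exists in $C$ it passes to the separable closure $K'$ (where $X^{p-1}-c^{-1}$ splits), proves $[K':C']=p$ there, and descends using the linear disjointness of $C'$ and $K$ over $C$. You need one of these devices. Alternatively, note that you already invoke Jacobson--Bourbaki for the upper bound: if you commit to it fully, the subring $A=K\cdot\id+K\partial_1+\cdots+K\partial_1^{(p-1)}$ of $\en_C(K)$ (closed under composition because $\partial_1^{(p)}=c\partial_1$) has $\dim_K A=p$ by Lemma \ref{indepenendent}, and Jacobson--Bourbaki gives $[K:C]=\dim_K A=p$ outright, making the eigenvector construction --- and hence $\lambda$ --- unnecessary.
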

\begin{proof}
First we show inductively that without loss of generality we can assume that $m=1$. Assume that $m>1$ and let $C'=\ker(\partial_1)$. By Lemma \ref{hsfrob}(4) and Corollary \ref{pvanish}, $\partial'=(\partial_{pi}|_{C'})_{i<p^{m-1}}$ is an $f'[m-1]$-derivation on $C'$, where $f'=f^{\fr}$.
\\
By Lemma \ref{approx} and Corollary \ref{pvanish}, the constant field of $\partial'$ coincides with $C$, so we are done by the inductive assumption and by the $m=1$ case. Assume  that $m=1$.
\\
By Remark \ref{constcoinc}(2), we have $C=\ker(\partial_1)$. By Remark \ref{gmpower}(4), there is $t\in vk[v]$ such that
$$\partial^{(p)}_v=\ev_t\circ \partial_v.$$
Hence we have $\partial_1^{(p)}=c\partial_1$ for $c\in k$ such that $t-cv\in v^2k[v]$ (i.e. $t=cv+\ldots$). The case  $\partial_1^{(p)}=0$ is treated in \cite[Theorem 27.3]{mat}, so we can assume that $c\neq 0$. We assume first that $c=1$ (which corresponds to the multiplicative case) and will treat the general case at the end of the proof.
\\
Let us take an arbitrary $a\in K$ and let
 $x\in K\setminus \{0\}$ be such that $\partial(x)=x$ (see Lemma \ref{dxisx}). By Lemma \ref{general}(2), we have
$$a=a-\partial^{(p-1)}(a)-\sum\limits_{k=1}^{p-1}\frac{\alpha_k}{x^k}x^{k},$$
where $\alpha_k=\sum\limits_{i=1}^{p-1}k^{p-1-i}\partial^{(i)}(a)$. By Lemma \ref{general}(1), we have $\partial(\alpha_k)=k\alpha_k$. Since $\partial(x)=x$, it is easy to see that
$$\partial\left(\frac{\alpha_k}{x^k}\right)=0.$$
Therefore $K$ is spanned over $C$ by $\{1,x,\ldots,x^{p-1}\}$, so $[K:C]=p$.
\\
Let us come back now to the general case when $\partial_1^{(p)}=c\partial_1$ for an arbitrary $c\in k\setminus \{0\}$. Note that for any $d\in C$ and $\partial_d:=d\partial$ we have
$$\partial_d^{(p)}=d^p\partial^{(p)}=d^pc\partial=d^{p-1}c\partial_d.$$
(The first equality is easy to see, it is also a special case of the Hochschild formula, see \cite[Theorem 25.5]{mat}.)
If $d\neq 0$, then the constants of $\partial_d$ coincide with the constants of $\partial$, so we can replace $\partial$ with $\partial_d$. Therefore we are done if there is $d\in C$ such that $d^{p-1}=c^{-1}$. Let $K'$ denote the separable closure of $K$. The derivation $\partial$ uniquely extends to a derivation $\partial'$ on $K'$. Let $C'$ denote the field of constants of $\partial'$. From the uniqueness of extensions of derivations, we get  $(\partial')^{(p)}=c\partial'$. Clearly, there is $d\in K'$ such that $d^{p-1}=c^{-1}$. Therefore $[K':C']=p$. Since $C'$ is linearly disjoint from $K$ over $C$ (see \cite[Corollary 1 p. 87]{kol1}), we get $[K:C]=p$.
\end{proof}

\section{Expanding HS-derivations}\label{secexpanding}
\noindent
In this section we generalize the result of Matsumura \cite[Theorem 7]{Mats1} about strong integrability of certain derivations. Throughout this section $k\subseteq M\subseteq K$ is a tower of fields such that $k$ is perfect (of characteristic $p>0$) and the extension $M\subseteq K$ is separable (not necessarily algebraic). We fix $m>0$ and set $q:=p^m$.
\\
\\
Our generalization is two-fold:
\begin{enumerate}
\item From an $M$-derivation $D$ on $K$ such that $D^{(p)}=0$ (equivalent to a $\ga[1]$-derivation, see Remark \ref{oneiterat}) to any $\ga[m]$-derivation (Theorem \ref{mainadditive}).

\item Analogue of (1) for $\gm[m]$-derivations (Theorem \ref{mainmultiplicative}).
\end{enumerate}
\noindent
It should be mentioned that Proposition \ref{caninacala}, which is the main technical point needed for $(1)$ above, was essentially obtained by Ziegler in \cite[Theorem 1]{Zieg3}.
\\
\\
It is natural to ask whether similar results can be obtained for any formal group law.
\begin{question}\label{mainquestion}
Let $F$ be a formal group law over $k$ and $\partial$ be an $F[m]$-derivation on $K$. Does $\partial$ expand to an $F$-derivation?
\end{question}
\noindent
Unfortunately, we do not know the answer to this question. In \cite{HK2}, we give some evidence why the answer may be negative.
\\
\\
For the notion of a $p$-basis of an extension of fields of characteristic $p>0$, the reader may consult \cite[p. 202]{mat}. The proof of \cite[Theorem 27.3(ii)]{mat} gives the following.
\begin{lemma}\label{pbasis0}
Let $M\subseteq L\subseteq K$ be a tower of fields ($M\subseteq K$ separable), $[K:L]=p$, $L\subseteq K$ is purely inseparable and $a\in K\setminus L$. Then we have:
\begin{enumerate}
\item There is $B_0\subseteq L$ such that $B_0\cup \{a^p\}$ is a $p$-basis of $L$ over $M$.

\item For any $B_0\subseteq L$ such that $B_0\cup \{a^p\}$ is a $p$-basis of $L$ over $M$, the set $B_0\cup \{a\}$ is a $p$-basis of $K$ over $M$.
\end{enumerate}
\end{lemma}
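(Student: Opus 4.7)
The plan is to reduce the whole statement to one $p$-independence claim, $a^p\notin M(L^p)$, after which the rest is bookkeeping with $p$-bases. The separability hypothesis enters only in establishing this claim.

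For part (1), the key observation is that $M(L^p)$ coincides with the image of the Frobenius on $L\cdot M^{1/p}$, namely $(L\cdot M^{1/p})^p$: Frobenius is a ring homomorphism, and its image is the subfield generated by $L^p$ and $M$, which is $M(L^p)$. So if $a^p\in M(L^p)$ then $a^p=\gamma^p$ for some $\gamma\in L\cdot M^{1/p}$, and hence $a=\gamma\in L\cdot M^{1/p}$ in characteristic $p$. Writing $a=\sum_i l_i\beta_i$ with $l_i\in L$ and choosing $\beta_i\in M^{1/p}$ to be $M$-linearly independent with $\beta_1=1$, I would rearrange this as a $K$-linear relation among the $\beta_i$ and invoke the linear disjointness of $K$ and $M^{1/p}$ over $M$ (equivalent to separability of $K/M$) to conclude $a=l_1\in L$, contradicting $a\in K\setminus L$. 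With $\{a^p\}$ thus $p$-independent in the separable subextension $L/M$, a standard Zorn's lemma argument extends it to a $p$-basis $B_0\cup\{a^p\}$ of $L/M$.

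For part (2), the preparatory identity is $K^p=L^p(a^p)$: any $x=\sum_{i<p}c_i a^i\in K$ with $c_i\in L$ satisfies $x^p=\sum_i c_i^p(a^p)^i$. Consequently $M(K^p)=M(L^p)(a^p)$ is contained in $L$. From $L=M(L^p)[B_0,a^p]$ and $K=L(a)$, the inclusion $K\subseteq M(K^p)[B_0,a]$ is immediate, giving the generation half of being a $p$-basis. For the $p$-independence half, given a hypothetical relation $\sum_{J,j}c_{J,j}\,b^J a^j=0$ with $c_{J,j}\in M(K^p)$ and $0\leqslant J(b),j<p$, I would group by powers of $a$ to obtain $C_j:=\sum_J c_{J,j}\,b^J\in M(K^p)[B_0]\subseteq L$, and use that $\{1,a,\ldots,a^{p-1}\}$ is an $L$-basis of $K$ to conclude $C_j=0$. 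Finally, expanding each $c_{J,j}\in M(L^p)(a^p)$ uniquely in the $M(L^p)$-basis $\{(a^p)^e:0\leqslant e<p\}$ and invoking the $p$-independence of $B_0\cup\{a^p\}$ in $L/M$ forces every $c_{J,j}=0$.

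The only genuinely non-routine step is the linear disjointness argument in part (1); the remainder is manipulation of $p$-bases plus evaluation in the $L$-basis of $K$ given by powers of $a$. Since this mirrors the structure of the proof of Matsumura's \cite[Theorem 27.3(ii)]{mat}, the whole argument can also be viewed as an extraction from that proof.
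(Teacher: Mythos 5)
Your proof is correct, and it takes the same route the paper intends: the paper gives no argument beyond the remark that ``the proof of \cite[Theorem 27.3(ii)]{mat} gives the following,'' and your write-up is precisely a correct extraction of that proof, with the separability hypothesis entering (as it must) only through MacLane's criterion to show $a^p\notin M(L^p)$. The remaining bookkeeping with the $L$-basis $\{1,a,\ldots,a^{p-1}\}$ of $K$ and the unique expansion over $M(L^p)[a^p]$ is sound.
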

\noindent
We will need a generalization of the second part of the lemma above.
\begin{lemma}\label{pbasis}
Let $m\in \Nn$ and $M\subseteq K_{m-1}\subseteq \ldots K_0\subseteq K$
be a tower of fields ($M\subseteq K$ separable) such that $K_{m-1}\subseteq K$ is purely inseparable and
$$[K:K_0]=[K_0:K_1]=\ldots=[K_{m-2}:K_{m-1}]=p.$$
Let $x\in K$ be such that for all $i\in \{0,\ldots,m-1\}$, we have $x^{p^i}\notin K_i$. Then there is $B_0\subseteq K_{m-1}$ such that $B_0\cup \{x\}$ is a $p$-basis of $K$ over $M$.
\end{lemma}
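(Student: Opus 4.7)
The plan is to induct on $m$, peeling off the top extension $K_0\subseteq K$ at each step using Lemma \ref{pbasis0}.

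For the base case $m=1$, the tower is simply $M\subseteq K_0\subseteq K$ and the hypothesis amounts to $x\notin K_0$. I would first invoke Lemma \ref{pbasis0}(1) to obtain some $B_0\subseteq K_0$ with $B_0\cup\{x^p\}$ a $p$-basis of $K_0$ over $M$, and then Lemma \ref{pbasis0}(2) to upgrade this to $B_0\cup\{x\}$ being a $p$-basis of $K$ over $M$. Since $K_0=K_{m-1}$ in this case, the conclusion is immediate.

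For the inductive step ($m\geqslant 2$, assuming the statement for towers of length $m-1$), the key move is to consider the truncated tower
\[
M\subseteq K_{m-1}\subseteq \ldots\subseteq K_1\subseteq K_0,
\]
which has length $m-1$, all intermediate degrees equal to $p$, and whose top extension $K_{m-1}\subseteq K_0$ is purely inseparable (inherited from $K_{m-1}\subseteq K$). Setting $y:=x^p\in K_0$, one computes $y^{p^i}=x^{p^{i+1}}$, so the hypothesis $x^{p^j}\notin K_j$ for $j\in\{1,\ldots,m-1\}$ translates, after the shift $j=i+1$, exactly to the analogous hypothesis for $y$ relative to this shorter tower. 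The inductive hypothesis then provides $B_0\subseteq K_{m-1}$ such that $B_0\cup\{x^p\}$ is a $p$-basis of $K_0$ over $M$.

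To close the induction I would apply Lemma \ref{pbasis0}(2) to the single extension $K_0\subseteq K$: it has degree $p$ and is purely inseparable, the ambient extension $M\subseteq K$ is separable by assumption, and $x\in K\setminus K_0$ is exactly the $i=0$ case of the hypothesis. The conclusion of \ref{pbasis0}(2) is precisely that $B_0\cup\{x\}$ is a $p$-basis of $K$ over $M$, finishing the induction. The only points requiring any verification are the inheritance of separability and pure inseparability for the intermediate fields (so that Lemma \ref{pbasis0} applies at each stage) and the correct re-indexing of the vanishing condition after replacing $x$ by $x^p$; both are routine, so I do not expect any genuine obstacle.
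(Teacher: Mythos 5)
Your proof is correct and is essentially the paper's argument: unrolling your induction on $m$ gives exactly the paper's proof, namely one application of Lemma \ref{pbasis0}(1) at the bottom of the tower (with $a=x^{p^{m-1}}$) followed by repeated applications of Lemma \ref{pbasis0}(2) climbing back up to $K$. The re-indexing and inheritance checks you flag are the same routine verifications the paper leaves implicit.
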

\begin{proof}
Clearly, for all $i\in \{0,\ldots,m-1\}$, the extension $M\subseteq K_i$ is separable. Using Lemma \ref{pbasis0}(1) for $a=x^{p^{m-1}}$ and $M\subseteq K_{m-1}\subseteq K_{m-2}$, we get $B_0\subseteq K_{m-1}$ such that $B_0\cup \{x^{p^{m}}\}$ is a $p$-basis of $K$ over $M$. Using Lemma \ref{pbasis0}(2) and the downward induction on $i$, we get that $B_0\cup \{x\}$ is a $p$-basis of $K$ over $M$.
\end{proof}
\noindent
For the remainder of this section we consider only the additive and the multiplicative laws, so we can assume that $k$ is the prime field $\Ff_p$.

\subsection{Additive case}\label{secadd}
We will need one fact about derivations.
\begin{lemma}\label{imisker}
Assume that $\partial$ is a non-zero derivation on $K$ such that $\partial^{(p)}=0$. Then
$$\im(\partial)=\ker(\partial^{(p-1)}).$$
\end{lemma}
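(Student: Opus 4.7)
The inclusion $\im(\partial) \subseteq \ker(\partial^{(p-1)})$ is immediate: if $y = \partial(x)$, then $\partial^{(p-1)}(y) = \partial^{(p)}(x) = 0$. So the whole work is in the reverse inclusion, and the plan is to do it by a $C$-dimension count, where $C := \ker(\partial)$.

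First I would record that $\im(\partial)$ and $\ker(\partial^{(p-1)})$ are both $C$-subspaces of $K$: the latter obviously so (as $\partial^{(p-1)}$ is $C$-linear), and the former because $c\,\partial(x)=\partial(cx)$ whenever $c\in C$. Since $\partial$ satisfies $\partial^{(p)}=0$, Remark \ref{oneiterat} lets me view $\partial$ as the first term of a $\ga[1]$-derivation, and Remark \ref{constcoinc}(2) says the field of constants in both senses coincides. Proposition \ref{degree} then gives $[K:C]=p$. Rank–nullity for the $C$-linear map $\partial:K\to K$ with one-dimensional kernel $C$ yields $\dim_C\im(\partial)=p-1$.

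Next I would produce a convenient $C$-basis of $K$. By Lemma \ref{indepenendent} (with $m=0$), the endomorphisms $\id_K,\partial,\ldots,\partial^{(p-1)}$ are $K$-linearly independent, in particular non-zero; so I may pick $a\in K$ with $\partial^{(p-1)}(a)\neq 0$. A standard descending argument shows $a,\partial(a),\ldots,\partial^{(p-1)}(a)$ are $C$-linearly independent: given a relation $\sum_{i=0}^{p-1}c_i\partial^{(i)}(a)=0$ with $c_i\in C$, applying $\partial^{(p-1)}$ kills all summands except $c_0\partial^{(p-1)}(a)$, forcing $c_0=0$; then applying $\partial^{(p-2)}$ forces $c_1=0$, and so on. Since $[K:C]=p$, these $p$ elements form a $C$-basis of $K$.

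Writing any $x\in K$ as $x=\sum_{i=0}^{p-1}c_i\partial^{(i)}(a)$ with $c_i\in C$, I compute
\[
\partial^{(p-1)}(x)=c_0\,\partial^{(p-1)}(a),
\]
so $x\in\ker(\partial^{(p-1)})$ if and only if $c_0=0$. Hence $\ker(\partial^{(p-1)})=\sum_{i=1}^{p-1}C\cdot\partial^{(i)}(a)$, a $C$-subspace of dimension exactly $p-1$. As each $\partial^{(i)}(a)=\partial(\partial^{(i-1)}(a))$ lies in $\im(\partial)$, we get $\ker(\partial^{(p-1)})\subseteq\im(\partial)$, and the two spaces have the same $C$-dimension $p-1$, so they coincide.

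The only nontrivial ingredients are Lemma \ref{indepenendent} and the equality $[K:C]=p$ from Proposition \ref{degree}; once these are in hand there is no real obstacle, just the $C$-basis argument above.
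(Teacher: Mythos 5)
Your proof is correct. It follows the same broad skeleton as the paper's --- both inclusions are settled by a $C$-dimension count resting on $[K:C]=p$ and on $\partial^{(p-1)}\neq 0$ (Lemma \ref{indepenendent}) --- but the mechanics are genuinely different. The paper bounds $\dim_C\ker(\partial^{(p-1)})\leqslant p-1$ from $\partial^{(p-1)}\neq 0$ and separately bounds $\dim_C\im(\partial)\geqslant p-1$ by invoking Matsumura's Theorem 27.3(ii) to produce an $x$ with $\partial(x)=1$, so that $1,x,\ldots,x^{p-2}$ all lie in the image. You instead get $\dim_C\im(\partial)=p-1$ for free from rank--nullity (the kernel of the $C$-linear map $\partial$ is exactly $C$, which is one-dimensional), and then build the ``cyclic'' $C$-basis $a,\partial(a),\ldots,\partial^{(p-1)}(a)$ from any $a$ with $\partial^{(p-1)}(a)\neq 0$, from which the reverse inclusion $\ker(\partial^{(p-1)})\subseteq\im(\partial)$ is read off directly. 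Your route is a bit more self-contained, since it dispenses with the special element satisfying $\partial(x)=1$ (an ingredient the paper reuses elsewhere, e.g.\ in Proposition \ref{caninacala}); the only cosmetic remark is that once you have the cyclic basis, the rank--nullity step is redundant, as the basis already gives both the dimension of $\ker(\partial^{(p-1)})$ and its containment in $\im(\partial)$.
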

\begin{proof} Let $C=\ker(\partial)$. By \cite[Theorem 27.3]{mat}, (see also Proposition \ref{degree}) $[K:C]=p$.
Clearly $\im(\partial)\subseteq \ker(\partial^{(p-1)})$. It is enough to show that
$$\dim_C\im(\partial)\geqslant \dim_C\ker(\partial^{(p-1)}).$$
By \cite[Theorem 25.4]{mat} (see also Lemma \ref{indepenendent}), $\partial^{(p-1)}\neq 0$, so $\dim_C\ker(\partial^{(p-1)})\leqslant p-1$.
\\
By \cite[Theorem 27.3(ii)]{mat}, there is $x\in K$ such that $\partial(x)=1$ (namely, $x=\partial^{(i-1)}(z)/\partial^{(i)}(z)$ for $z\in K\setminus C$ and $i>0$ such that $\partial^{(i)}(z)\neq 0$ and $\partial^{(i+1)}(z)=0$). Therefore, $\partial(x^n)=nx^{n-1}$ for $n=0,\ldots,p-1$ and $1,x,x^2,\ldots,x^{p-2}$ are linearly independent over $C$. Hence $\dim_C(\im(\partial))\geqslant p-1$.
\end{proof}
\noindent
We show now that any $\ga[m]$-derivation has something in common with the canonical one.
\begin{prop}\label{caninacala}
Let $\partial$ be a $\ga[m]$-derivation on $K$ such that $\partial_1$ is non-zero. Then there is $x\in K$ such that
$$\partial_1(x)=1,\ \ \ \partial_{2}(x)=0,\ \ \ldots,\ \partial_{q-1}(x)=0.$$
\end{prop}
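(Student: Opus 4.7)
The plan is to prove this by induction on $m$. For $m=1$, Remark \ref{oneiterat} shows that $\partial_1$ is a derivation with $\partial_1^{(p)}=0$, so Matsumura's \cite[Theorem 27.3(ii)]{mat} provides $x$ with $\partial_1(x)=1$; the iterativity $\partial_n=\partial_1^{(n)}/n!$ (valid for $n<p$) then forces $\partial_n(x)=\partial_1^{(n-1)}(1)/n!=0$ for $1<n<p$.

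For the inductive step, I would first apply the inductive hypothesis to the $\ga[m-1]$-derivation $\partial^{(m-1)}:=(\partial_i)_{i<p^{m-1}}$ (see Lemma \ref{hsfrob}(2)) to obtain $y\in K$ with $\partial_1(y)=1$ and $\partial_n(y)=0$ for $2\le n<p^{m-1}$. Let $C'$ and $C$ denote the fields of constants of $\partial^{(m-1)}$ and $\partial$ respectively, so $[C':C]=p$ by Proposition \ref{degree}. Next I want to compute $\partial_n(y)$ for $p^{m-1}\le n<p^m$. Writing $n=\sum a_ip^i$ in base $p$, if some $a_i$ with $i<m-1$ is non-zero then by iterativity and Lucas' theorem $\partial_n=c\,\partial_{p^i}\circ\partial_{n-p^i}$ with $c\in\Ff_p^{\times}$; commuting the factors and using $\partial_{p^i}(y)=0$ when $1\le i<m-1$ (respectively $\partial_{n-1}(1)=0$ when $i=0$) one obtains $\partial_n(y)=0$. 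Hence only the quantities $y_k:=\partial_{kp^{m-1}}(y)$ for $1\le k\le p-1$ can be non-zero, and a similar Lucas computation shows each $y_k$ lies in $C'$. Lemma \ref{hsfrob}(4) then identifies $(\partial_{ip^{m-1}}|_{C'})_{i<p}$ as a $\ga[1]$-derivation on $C'$ whose first term $D:=\partial_{p^{m-1}}|_{C'}$ satisfies $D^{(p)}=0$; iterativity yields $D(y_k)=(k+1)y_{k+1}$ and $D(y_{p-1})=0$ (the last because $\binom{p^m}{p^{m-1}}\equiv 0\pmod p$ by Lucas).

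I then modify $y$ in two stages. Since $D$ vanishes on $C$, the operator $D^{(p-1)}\colon C'\to C$ is $C$-linear, and by Lemma \ref{indepenendent} it is non-zero, hence surjective from the $p$-dimensional $C$-space $C'$ onto the $1$-dimensional $C$. Using $y_{p-1}\in C$, I pick $w\in C'$ with $D^{(p-1)}(w)=-(p-1)!\,y_{p-1}$ and set $y_*:=y+w$; the new quantities $(y_*)_k=y_k+D^{(k)}(w)/k!$ satisfy the same structural relations, but now $(y_*)_{p-1}=0$. This forces $D^{(p-2)}((y_*)_1)=(p-1)!(y_*)_{p-1}=0$, so $(y_*)_1\in\ker D^{(p-2)}\subseteq\ker D^{(p-1)}=\im D$ by Lemma \ref{imisker}. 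I then choose $z\in C'$ with $D(z)=(y_*)_1$ and set $x:=y_*-z$. Using $z\in C'$ together with the identities $D^{(k)}(z)=D^{(k-1)}((y_*)_1)=k!(y_*)_k$ one verifies directly that $\partial_1(x)=1$ and $\partial_n(x)=0$ for all $2\le n<p^m$. The hardest part is the digit-wise Lucas analysis that pins down the structure of $\partial_n(y)$ and shows $y_k\in C'$, together with the realisation that a single modification $y\mapsto y-z$ with $z\in C'$ is insufficient: $(y)_1$ lies in $\im D=\ker D^{(p-1)}$ only after the preparatory step $y\mapsto y+w$ has killed the top obstruction $y_{p-1}\in C$.
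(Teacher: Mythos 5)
Your argument is correct, and at its core it is the same induction as the paper's: truncate one level down, produce a canonical element $y$ there, observe that the remaining obstruction at index $p^{m-1}$ lies in $C'\cap\ker\bigl(\partial_{p^{m-1}}^{(p-1)}\bigr)=\partial_{p^{m-1}}(C')$ (Lemma \ref{imisker}), and subtract a preimage. Two differences in execution are worth noting. First, where you carry out a digit-wise Lucas analysis to show that only the $y_k=\partial_{kp^{m-1}}(y)$ can be non-zero and that they all lie in $C'$, the paper dispatches everything above the single index $p^{m}$ (in its indexing) with a one-line appeal to iterativity; your version is more explicit but proves the same facts. Second, and more substantively: your preparatory step $y\mapsto y_*=y+w$ is redundant, and the closing claim that ``a single modification $y\mapsto y-z$ with $z\in C'$ is insufficient'' is mistaken. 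Indeed, $D^{(p-1)}(y_1)=\partial_{p^{m-1}}^{(p-1)}\bigl(\partial_{p^{m-1}}(y)\bigr)=\partial_{p^{m-1}}^{(p)}(y)=0$ directly (equivalently, by your own relations $D^{(p-1)}(y_1)=(p-1)!\,D(y_{p-1})=0$), so $y_1\in\ker D^{(p-1)}=\im D$ without any correction of $y_{p-1}$, and one stage suffices --- this is exactly the shortcut the paper uses. The extra stage does no harm (everything you assert about $w$ checks out, including $y_{p-1}\in C$ and the surjectivity of $D^{(p-1)}\colon C'\to C$), so the proof stands; it is just longer than necessary.
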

\begin{proof}
Induction on $m$. The case of $m=1$ is \cite[Theorem 27.3(ii)]{mat}. Assume that the proposition is true for $m$ and take a $\ga[m+1]$-derivation $\partial$ on $K$ over $k$ such that $\partial_1$ is non-zero. Let $\partial'=(\partial_i)_{i<p^{m}}$. By Lemma \ref{hsfrob}(2), $\partial'$ is a $\ga[m]$-derivation.
\\
By the inductive assumption, there is $x\in K$ such that
\begin{equation}
\partial_1(x)=1,\ \  \partial_2(x)=0,\ \ldots\ ,\partial_{p^{m}-1}(x)=0.\tag{$*$}
\end{equation}
Let $C$ be the field of constants of $\partial$ and $C'$ the field of constants of $\partial'$. By higher Leibniz rules, $\partial_{p^m}$ is a $C$-derivation on $C'$. It is easy to see that $\partial_{p^m}$ is non-zero on $C'$, since $x^{p^m}\in C'$ and
$$\partial_{p^m}(x^{p^m})=\partial_1(x)^{p^m}\neq 0.$$
By Remark \ref{gapower}(1), $\partial_{p^m}^{(p)}=0$, so by Lemma \ref{imisker} we have
\begin{equation}
\partial_{p^m}(C')=\ker(\partial_{p^m}^{(p-1)})\cap C'.\tag{$**$}
\end{equation}
Since $\partial_{p^m}^{(p)}=0$, we have $\partial_{p^m}(x)\in \ker(\partial_{p^m}^{(p-1)})$. Since $\partial_{p^m}$ commutes with $\partial_1,\ \ldots\ ,\partial_{p^{m-1}}$, we get by $(*)$ that $\partial_{p^m}(x)\in C'$. By $(**)$, there is $y\in C'$ such that
$$\partial_{p^m}(y)=\partial_{p^m}(x).$$
Let $z:=y-x$. It is clear that
$$
\partial_1(z)=1,\ \  \partial_2(z)=0,\ \ldots\ ,\partial_{p^{m}}(z)=0.
$$
By the iterativity rule, for any $i<(p-1)p^m$ we also have $\partial_{p^{m}+i}(z)=0$.
\end{proof}
\noindent
We will call an element $x\in K$ satisfying the conclusion of Proposition \ref{caninacala}, a \emph{canonical} element for a $\ga[m]$-derivation $\partial$.
\begin{remark}
Ziegler in \cite{Zieg3} and \cite{Zieg2} defines the notion of a \emph{canonical $p$-basis} which in our ``ordinary'' (i.e. one HS-derivation) case reduces to the notion of a canonical element. For the sake of clarity we restrain ourselves from considering the case of several HS-derivations, see Section \ref{multi}.
\end{remark}
\noindent
We can prove now our additive integrability theorem. This theorem is implicit in work of Ziegler, see \cite{Zieg3} and \cite{Zieg2}. Let us recall that $M\subseteq K$ is a separable (not necessarily algebraic) extension of fields.
\begin{theorem}\label{mainadditive}
Let $\partial=(\partial_i)_{i<p^m}$ be a $\ga[m]$-derivation on $K$ over $M$. Then $\partial$ can be expanded to a $\ga$-derivation on $K$ over $M$.
\end{theorem}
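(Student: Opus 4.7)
The plan is to reduce to the case $\partial_1\neq 0$, extract a canonical element $x$ for $\partial$ (Proposition \ref{caninacala}), pair it with a subset $B_0$ of the field of constants $C$ of $\partial$ to obtain a $p$-basis of $K$ over $M$ (via Lemma \ref{pbasis}), and define the sought $\gaf$-derivation $\tilde\partial$ on $K$ by the canonical prescription $\tilde\partial_v(x)=x+v$, $\tilde\partial_v(b)=b$ for $b\in B_0$, extended via the $p$-basis structure.

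For the reduction, if $\partial=0$ I take $\tilde\partial=(\id,0,0,\ldots)$. Otherwise let $j$ be the least integer with $\partial_{p^j}\neq 0$; standard manipulation of the $\gaf$-iterativity rule combined with Lucas's theorem (for $n=qp^k$ with $\gcd(q,p)=1$ and $k<j$, the identity $\partial_{(q-1)p^k}\circ\partial_{p^k}=q\,\partial_n$ and $\partial_{p^k}=0$ force $\partial_n=0$) shows $\partial_n=0$ for every $n<p^m$ not divisible by $p^j$. Lemma \ref{hsfrob}(4) (using $\gaf^{\fr^j}=\gaf$ as $\gaf$ is defined over $\Ff_p$) then yields that $\partial''=(\partial_{ip^j})_{i<p^{m-j}}$ is a $\ga[m-j]$-derivation with $\partial''_1\neq 0$. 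Once $\partial''$ has been expanded to a $\gaf$-derivation $\tilde\partial''$, the assignment $\tilde\partial_X:=\ev_{X^{p^j}}\circ\tilde\partial''_X$ is a $\gaf$-derivation on $K$ by Lemma \ref{pullpushf}(1) applied to the endomorphism $X^{p^j}\colon\gaf\to\gaf$, and visibly expands $\partial$. So one may assume $\partial_1\neq 0$.

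Apply Proposition \ref{caninacala} to obtain $x\in K$ with $\partial_1(x)=1$ and $\partial_n(x)=0$ for $2\leq n<p^m$. For $0\leq j\leq m-1$ set $C_j:=\bigcap_{0<i<p^{m-j}}\ker\partial_i$; by Lemma \ref{hsfrob}(2), $(\partial_i)_{i<p^{m-j}}$ is a $\ga[m-j]$-derivation with constant field $C_j$, so Proposition \ref{degree} gives $[K:C_j]=p^{m-j}$ and hence $[C_{j+1}:C_j]=p$. The identity $\partial_v(x)=x+v$ in $K[v]/v^{p^m}$ together with Frobenius yields $\partial_v(x^{p^i})=(x+v)^{p^i}=x^{p^i}+v^{p^i}$, so $\partial_{p^i}(x^{p^i})=1$ while all other $\partial_n(x^{p^i})$ vanish on $0<n<p^m$; in particular $x^{p^i}\notin C_{m-1-i}$. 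The same Frobenius trick gives $\partial_v(r^{p^m})=r^{p^m}$ for every $r\in K$, so $K^{p^m}\subseteq C_0=:C$ and $K/C$ is purely inseparable. Applying Lemma \ref{pbasis} to the tower $M\subseteq C\subseteq C_1\subseteq\ldots\subseteq C_{m-1}\subseteq K$ with $K_i:=C_{m-1-i}$ produces $B_0\subseteq C$ making $B:=B_0\cup\{x\}$ a $p$-basis of $K$ over $M$.

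Finally, I construct $\tilde\partial_v\colon K\to K\llbracket v\rrbracket$ from $B$ and compare with $\partial$. For each $n\geq 1$ the $B$-monomials of per-coordinate exponent $<p^n$ form an $MK^{p^n}$-basis of $K$, so there is a unique $MK^{p^n}$-linear map $\tilde\partial^{(n)}_v\colon K\to K[v]/v^{p^n}$ sending $x^{a_0}\prod b^{a_b}$ to $(x+v)^{a_0}\prod b^{a_b}$. Since $(x+v)^{p^n}\equiv x^{p^n}\pmod{v^{p^n}}$, the truncations $\tilde\partial^{(n)}$ are consistent as $n$ grows and assemble into $\tilde\partial_v$ with $\tilde\partial_v\equiv\id\pmod v$; multiplicativity and the $\gaf$-iterativity diagram are verified on the generators of $B$ (both compositions return $x+v+w$ on $x$ and fix $b\in B_0$), and propagate to all of $K[v]/v^{p^n}$ by $MK^{p^n}$-linearity at each truncation. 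Taking $n=m$, both $\partial_v$ and $\tilde\partial_v\bmod v^{p^m}$ are $MC$-linear maps (using $K^{p^m}\subseteq C$) agreeing on the $MC$-basis $B$, hence coincide, so $\tilde\partial$ expands $\partial$. The main obstacle lies in this last paragraph: verifying that the prescription on a $p$-basis genuinely assembles into a coherent and iterative HS-derivation on all of $K$ is a delicate but standard exercise in the calculus of $p$-bases of separable extensions, whose key ingredient is precisely the inclusion $K^{p^m}\subseteq C$ that makes the ambient $\ga[m]$-structure compatible with the chosen prescription on $B$.
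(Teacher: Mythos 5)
Your proof is correct and follows the paper's skeleton for most of its length --- the reduction to $\partial_1\neq 0$ (your Lucas-theorem computation is a hands-on substitute for the paper's appeal to Lemma \ref{approx} and Corollary \ref{pvanish}, and the pullback along $X^{p^j}\colon\gaf\to\gaf$ via Lemma \ref{pullpushf}(1) is exactly the paper's second case), then the canonical element from Proposition \ref{caninacala}, the tower of constant fields with degrees given by Proposition \ref{degree}, and Lemma \ref{pbasis} producing the $p$-basis $B=B_0\cup\{x\}$. You diverge only in the final step. The paper places the canonical $\ga$-derivation of Example \ref{gaitex} on $M(B)$ over $M(B_0)$ (using that $x$ is transcendental over $M(B_0)$, \cite[Theorem 26.8]{mat}) and then extends it uniquely to $K$ along the \'etale extension $M(B)\subseteq K$ by \cite[Theorem 27.2]{mat}; the comparison with $\partial$ is likewise delegated to that uniqueness. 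You instead build the expansion directly on $K$ as the inverse limit of the $MK^{p^n}$-linear maps prescribed by $x\mapsto x+v$, $b\mapsto b$ on the degree-$<p^n$ monomials in $B$. This is a legitimate, more self-contained route (closer in spirit to Ziegler's construction in \cite{Zieg3}, \cite{Zieg2}), at the cost of the verifications you flag; these do go through, essentially because the product of two basis monomials is a basis monomial times an element of $K^{p^n}$, because $(x+v)^{p^n}\equiv x^{p^n}$, and because two ring homomorphisms out of $K$ which are $MK^{p^n}$-linear and agree on $B$ must agree. One small imprecision in the last comparison: $B$ is a $p$-basis, not an $MC$-linear basis of $K$, and $MC$-linearity of $\tilde\partial_v$ is not yet available at that point; the clean statement is that $\partial_v$ and $\tilde\partial_v\bmod v^{p^m}$ are both $MK^{p^m}$-algebra homomorphisms agreeing on $B$ (here $MK^{p^m}\subseteq C$ by your Frobenius observation, and $\partial_v$ kills $M$ and fixes $p^m$-th powers), and $K$ is generated by $B$ over $MK^{p^m}$ --- which is exactly what your setup provides.
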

\begin{proof}
Let us assume first that $\partial_1$ is non-zero. The proof in this case is similar to the proof of \cite[Theorem 7]{Mats1}.
Take a canonical element $x\in K$ from Proposition \ref{caninacala}. For $i\in \{0,\dots,m-1\}$ let us define
$$K_i:=\bigcap_{j<p^{i+1}}\ker(\partial_i).$$
By Proposition \ref{degree}, for each $i\in \{0,\dots,m-1\}$, we have $[K:K_i]=p^{i+1}$ and
$$\partial_{p^{i}}(x^{p^{i}})=\partial_1(x)^{p^{i}}=1\neq 0,$$
so $x^{p^{i}}\notin K_i$. Hence $M,K_{m-1},\ldots,K_0,K,x$ satisfy the assumptions of Lemma \ref{pbasis}, therefore there is $B_0\subset K_{m-1}$ such that $B:=B_0\cup \{x\}$ is a $p$-basis of $K$ over $M$. By \cite[Theorem 26.8]{mat} $x$ is transcendental over $M(B_0)$, so we can define a canonical $\ga$-derivation on $M(B)$ over $M(B_0)$ (see Example \ref{gaitex}). By \cite[Theorem 26.8]{mat} again, $M(B)\subseteq K$ is \'{e}tale, so by \cite[Theorem 27.2]{mat} our canonical $\ga$-derivations uniquely extends to a $\ga$-derivation $D$ on $K$ over $M$. Since for any $i<q$, $\partial_i$ coincides with $D_i$ on $B$ (therefore on $M(B)$ as well), it is easy to see that they coincide on $K$.
\\
\\
Let us now consider the case $\partial_1=0$. Obviously, we can assume that $\partial_i$ is non-zero for some $0<i<p^m$. By Lemma \ref{approx} and Corollary \ref{pvanish}, there is $j<m$ such that $\partial_1=\ldots=\partial_{p^j-1}=0$ but $\partial_{p^{j}}$ is non-zero. Let $l:=m-j$ and $\partial'=(\partial_{ip^j})_{i<p^l}$. By Lemma \ref{hsfrob}(4), $\partial'$ is a $\ga[l]$-derivation (since $(X+Y)^{\fr^{-j}}$ clearly coincides with $X+Y$). Since $\partial_1'$ is non-zero, by the first part of the proof $\partial'$ expands to a $\ga$-derivation $D'=(D_i')_{i\in \Nn}$. Define $D=(D_i)_{i\in \Nn}$, where $D_i=D'_{i/p^j}$ for $i$ divisible by $p^j$ and $D_i=0$ otherwise. Using the map
$$\ev_{X^{p^l}}:K\llbracket X\rrbracket\to K\llbracket X\rrbracket,$$
which gives a morphism from $\ga$ to $\ga$ and Lemma \ref{pullpushf}, we see that $D$ is a $\ga$-derivation. By the construction, $D$ expands $\partial$.
\end{proof}

\subsection{Multiplicative case}\label{secmult}
In this subsection we prove an analogue of Theorem \ref{mainadditive} for multiplicatively iterative derivations. The general scheme of the proof is the same as in the additive case: the main point is to find a canonical element (in the appropriate, multiplicative sense) for a truncated multiplicatively iterative HS-derivation, this is done in Proposition \ref{caninacalm}.
\\
\\
The lemma below does not require any iterativity assumption.
\begin{lemma}\label{lemma1}
Let $\partial$ be an $m$-truncated HS-derivation on a ring $R$. For any $i<m$,
$j\in \Nn$, $x\in R$ and $y\in R$ we have
\begin{equation}
\partial_{p^{i}}^{(j)}(xy^{p^i})=\sum_{l=0}^{j}{j \choose l}\partial^{(j-l)}_{p^i}(x)  \cdot\partial_1^{(l)}(y)^{p^i}.\tag{$*$}
\end{equation}
\end{lemma}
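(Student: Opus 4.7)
The plan is to proceed by induction on $j$, with the following \emph{divided-power Leibniz rule} as the computational engine: for all $u,w\in R$,
\[
\partial_{p^{i}}(u\,w^{p^{i}}) \;=\; \partial_{p^{i}}(u)\cdot w^{p^{i}} \;+\; u\cdot \partial_{1}(w)^{p^{i}}. \qquad (\star)
\]
Once $(\star)$ is in hand, $(*)$ reduces to an inductive bookkeeping argument driven entirely by Pascal's rule $\binom{j}{l}+\binom{j}{l-1}=\binom{j+1}{l}$.

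To establish $(\star)$, I would first compute $\partial_{b}(w^{p^{i}})$ for $b\le p^{i}$. Since $\partial_{v_m}\colon R\to R[v_m]$ is a ring homomorphism and $\ch(k)=p$,
\[
\partial_{v_m}(w^{p^{i}}) \;=\; \bigl(\partial_{v_m}(w)\bigr)^{p^{i}} \;=\; \sum_{n\ge 0}\partial_{n}(w)^{p^{i}}\,v_{m}^{n p^{i}}.
\]
Reading off coefficients, one gets $\partial_{b}(w^{p^{i}})=0$ whenever $0<b<p^{i}$, while $\partial_{p^{i}}(w^{p^{i}})=\partial_{1}(w)^{p^{i}}$. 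Substituting these into the usual HS-Leibniz expansion $\partial_{p^{i}}(u\,w^{p^{i}})=\sum_{a+b=p^{i}}\partial_{a}(u)\partial_{b}(w^{p^{i}})$ kills every summand except the two shown in $(\star)$.

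For the induction itself, the base case $j=0$ is the tautology $xy^{p^{i}}=xy^{p^{i}}$. Assuming the identity at level $j$, I would apply $\partial_{p^{i}}$ to the right-hand side of $(*)$ summand by summand, invoking $(\star)$ with $u=\partial_{p^{i}}^{(j-l)}(x)$ and $w=\partial_{1}^{(l)}(y)$. This yields one contribution $\binom{j}{l}\partial_{p^{i}}^{(j+1-l)}(x)\,\partial_{1}^{(l)}(y)^{p^{i}}$ for $l=0,\dots,j$ and, after the reindexing $l\mapsto l-1$, a second contribution $\binom{j}{l-1}\partial_{p^{i}}^{(j+1-l)}(x)\,\partial_{1}^{(l)}(y)^{p^{i}}$ for $l=1,\dots,j+1$. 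Pascal's rule, together with the trivial boundary cases at $l=0$ and $l=j+1$, then assembles these into the formula at level $j+1$.

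There is no genuine obstacle; the only non-formal ingredient is the characteristic-$p$ identity $\partial_{v_m}(w^{p^{i}})=\partial_{v_m}(w)^{p^{i}}$, which is precisely what forces the $p^{i}$-th-power structure on the right-hand side of $(*)$. Everything else is combinatorics.
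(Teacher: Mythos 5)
Your proof is correct and follows essentially the same route as the paper: induction on $j$ via the Leibniz rule, using that $\partial_s(w^{p^i})=0$ for $0<s<p^i$ and $\partial_{p^i}(w^{p^i})=\partial_1(w)^{p^i}$, then Pascal's rule. The only cosmetic difference is that you isolate the one-step identity $(\star)$ and justify it explicitly via the ring homomorphism $\partial_{v_m}$, which the paper leaves implicit.
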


\begin{proof}
We prove $(*)$ by induction on $j$. The case of $j=0$ is clear. Let us assume that $(*)$ above holds. Then we have
\begin{IEEEeqnarray*}{rCl}
\partial_{p^i}^{(j+1)}(xy^{p^i})
&=&
\sum_{l=0}^{j}{j \choose l}\partial_{p^i}\Big(\partial^{(j-l)}_{p^i}(x)  \cdot\partial_1^{(l)}(y)^{p^i}\Big)
\\  &=& \sum_{l=0}^{j}{j \choose l}\Big(\partial_{p^i}(\partial^{(j-l)}_{p^i}(x))  \cdot\partial_1^{(l)}(y)^{p^i}
+\partial^{(j-l)}_{p^i}(x)  \cdot \partial_{p^i}(\partial_1^{(l)}(y)^{p^i})\Big)
\\  &=& \sum_{l=0}^{j}{j \choose l}\Big(\partial^{(j+1-l)}_{p^i}(x)  \cdot\partial_1^{(l)}(y)^{p^i}
+\partial^{(j-l)}_{p^i}(x)  \cdot\partial_1^{(l+1)}(y)^{p^i}\Big)
\\  &=& \sum_{l=0}^{j+1}{j+1 \choose l}\partial^{(j+1-l)}_{p^i}(x)  \cdot\partial_1^{(l)}(y)^{p^i},
\end{IEEEeqnarray*}
where the second equality holds, since for every $0<s<p^i$ and $a\in R$ we have $\partial_s(a^{p^i})=0$.
\end{proof}
\begin{lemma}\label{lemma2}
Let $\partial$ be a $\gm[m]$-derivation on a ring $R$ and $i\in \{0,\ldots,m-2\}$. Assume that $x\in R$ satisfies
$$\partial_1(x)=x,\ \ \partial_p(x)=0,\ \ldots \ ,\partial_{p^i}(x)=0.$$
Take any $y\in R$ and let $x':=y^{p^{i+1}}x-\partial_{p^{i+1}}^{(p-1)}(y^{p^{i+1}}x)$. Then we have:
$$\partial_1(x')=x',\ \ \partial_p(x')=0,\ \ldots \ ,\partial_{p^{i+1}}(x')=0.$$
\end{lemma}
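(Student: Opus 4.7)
The plan is to set $z := y^{p^{i+1}} x$ and, in order, verify (a) $\partial_1(x') = x'$, (b) $\partial_{p^j}(x') = 0$ for $0 < j \leq i$, and (c) $\partial_{p^{i+1}}(x') = 0$. All three will reduce to a short computation once we understand how $\partial_a$ acts on $y^{p^{i+1}}$ and once we invoke the multiplicative-specific identity $\partial^{(p)} = \partial$ for the last step.

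For the basic ingredient, since $\partial_X$ is a ring homomorphism,
\[
\partial_X(y^{p^{i+1}}) = \partial_X(y)^{p^{i+1}} = \sum_{n} \partial_n(y)^{p^{i+1}} X^{n p^{i+1}},
\]
where the second equality iterates the characteristic-$p$ identity $(\sum a_n X^n)^p = \sum a_n^p X^{np}$. Hence $\partial_a(y^{p^{i+1}})$ vanishes unless $p^{i+1} \mid a$, and $\partial_{p^{i+1}}(y^{p^{i+1}}) = \partial_1(y)^{p^{i+1}}$. Combining with the higher Leibniz rule and the hypothesis on $x$, the terms $\partial_{p^j}(z)$ compute cleanly: $\partial_1(z) = y^{p^{i+1}} x = z$, $\partial_{p^j}(z) = y^{p^{i+1}} \partial_{p^j}(x) = 0$ for $0 < j \leq i$, and
\[
\partial_{p^{i+1}}(z) = y^{p^{i+1}} \partial_{p^{i+1}}(x) + \partial_1(y)^{p^{i+1}} x.
\]

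Because $\gm[m]$ is symmetric, all maps $\partial_a$ pairwise commute (as in the discussion at the start of Section \ref{secmorgl}), so $\partial_{p^j}$ commutes with $\partial_{p^{i+1}}^{(p-1)}$ for every $j$. Plugging in the values from the previous paragraph gives $\partial_1(\partial_{p^{i+1}}^{(p-1)}(z)) = \partial_{p^{i+1}}^{(p-1)}(z)$ and $\partial_{p^j}(\partial_{p^{i+1}}^{(p-1)}(z)) = 0$ for $0 < j \leq i$, so (a) and (b) follow immediately from the definition $x' = z - \partial_{p^{i+1}}^{(p-1)}(z)$.

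The critical step is (c), and this is where the peculiar choice of $x'$ pays off. By Remark \ref{gmpower}(2), applicable to our truncated $\gm[m]$-derivation via Remark \ref{gmpower}(4), one has $\partial^{(p)} = \partial$; in particular $\partial_{p^{i+1}}^{(p)} = \partial_{p^{i+1}}$. Hence
\[
\partial_{p^{i+1}}(\partial_{p^{i+1}}^{(p-1)}(z)) = \partial_{p^{i+1}}^{(p)}(z) = \partial_{p^{i+1}}(z),
\]
so $\partial_{p^{i+1}}(x') = 0$. The main point — and the reason this elegant construction works — is precisely this multiplicative input: without $\partial_{p^{i+1}}^{(p)} = \partial_{p^{i+1}}$, subtracting $\partial_{p^{i+1}}^{(p-1)}(z)$ would have no a priori reason to kill $\partial_{p^{i+1}}(z)$, and a genuinely different argument would be needed (as indeed it is in the additive case of Proposition \ref{caninacala}, where $\partial^{(p)}$ vanishes instead of fixing things).
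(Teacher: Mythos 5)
Your proof is correct and follows essentially the same route as the paper's: the three identities are obtained from the vanishing of $\partial_s(y^{p^{i+1}})$ for $0<s<p^{i+1}$, the pairwise commutativity of the $\partial_a$'s, and the multiplicative identity $\partial_{p^{i+1}}^{(p)}=\partial_{p^{i+1}}$ from Remark \ref{gmpower}(2),(4). The only difference is organizational (you compute $\partial_a(z)$ once and then push everything through commutativity), which is a clean way to package the paper's three displayed computations.
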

\begin{proof}
We compute
\begin{IEEEeqnarray*}{rCl}
\partial_1(x')
&=&
y^{p^{i+1}}\partial_1(x)-\partial_{p^{i+1}}^{(p-1)}(y^{p^{i+1}}\partial_1(x))
\\ &=&
y^{p^{i+1}}x-\partial_{p^{i+1}}^{(p-1)}(y^{p^{i+1}}x)
\\ &=&
x';
\end{IEEEeqnarray*}
$$\partial_{p^{i+1}}(x')=\partial_{p^{i+1}}(y^{p^{i+1}}x)-\partial_{p^{i+1}}^{(p)}(y^{p^{i+1}}x)=0.$$
Take any $j\in \{1,\ldots,i\}$. We have
\begin{equation}
\partial_{p^j}(x')=\partial_{p^j}(y^{p^{i+1}}x)-\partial_{p^j}(\partial_{p^{i+1}}^{(p-1)}(y^{p^{i+1}}x)).\tag{$*$}
\end{equation}
\noindent
By the choice of $j$ we have $\partial_{p^j}(y^{p^{i+1}})=0$ and $\partial_{p^j}(x)=0$, hence the first term in $(*)$ vanishes. The second one vanishes for the same reason, since $\partial_{p^j}$ commutes with $\partial_{p^{i+1}}$.
\end{proof}
\noindent
We show below that canonical elements for $\gm[m]$-derivations exist as well.
\begin{prop}\label{caninacalm}
Assume that  $\partial$ is a $\gm[m]$-derivation such that $\partial_1$ is non-zero. Then there is $x\in K$ such that
$$\partial_1(x)=x+1\neq 0,\ \ \partial_{2}(x)=0,\ \ldots\ , \partial_{q-1}(x)=0.$$
\end{prop}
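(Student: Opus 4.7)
I will look for $y\in K^{\times}$ satisfying $\partial_1(y)=y$ and $\partial_n(y)=0$ for every $2\leqslant n\leqslant q-1$; then $x:=y-1$ gives $\partial_1(x)=y=x+1\neq 0$ and the required vanishings on $x$ (since $\partial_n(1)=0$ for $n\geqslant 1$). The argument goes by induction on $m$.

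\textbf{Base case $m=1$.} Remark~\ref{gmpower}(2) gives $\partial_1^{(p)}=\partial_1$, so Lemma~\ref{dxisx} produces $y\in K^{\times}$ with $\partial_1(y)=y$. The multiplicative iteration formula of Example~\ref{multiter} yields the operator identity $\partial_1\circ\partial_l=l\partial_l+(l+1)\partial_{l+1}$. Since $\partial_1$ and $\partial_l$ commute and $\partial_1(y)=y$, evaluating on $y$ gives the recursion $(1-l)\partial_l(y)=(l+1)\partial_{l+1}(y)$; an easy induction (with $l+1\not\equiv 0\pmod p$ throughout $1\leqslant l\leqslant p-2$) then forces $\partial_n(y)=0$ for $2\leqslant n\leqslant p-1$.

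\textbf{Inductive step.} Starting with the $y_1$ of the base case, I iterate Lemma~\ref{lemma2} for $i=0,1,\ldots,m-2$, picking the auxiliary element generically at each step so that the output remains non-zero; this produces $y_m\in K^{\times}$ with $\partial_1(y_m)=y_m$ and $\partial_{p^k}(y_m)=0$ for $k=1,\ldots,m-1$. The main obstacle is then to upgrade these vanishings at powers of $p$ to $\partial_n(y_m)=0$ for every $n\in\{2,\ldots,q-1\}$, since Lemma~\ref{lemma2} only controls the power-of-$p$ components.

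\textbf{Promotion.} I form
\[\chi(v):=y_m^{-1}\partial_v(y_m)\in K[v_m],\]
which is well-defined because $y_m\in K^{\times}$. Chasing the $\gm[m]$-iterativity diagram applied to $y_m$---both $\partial_v[w]\partial_w(y_m)$ and $\ev_F\partial_u(y_m)$ can be computed from $\partial_v(y_m)=y_m\chi(v)$---yields the functional equation
\[\chi(v+w+vw)=\chi(v)\chi(w)\quad\text{in }K[v_m,w_m].\]
Hence $\chi$ is a group-like element of the truncated Hopf algebra $k[v_m]$ with multiplicative comultiplication, so $\chi(v)=(1+v)^j$ for some $j\in\Zz/p^m\Zz$. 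By Lucas' theorem the coefficient of $v^{p^k}$ in $(1+v)^j$ equals the $k$-th base-$p$ digit of $j$: the condition $\partial_1(y_m)=y_m$ forces this digit to be $1$ at $k=0$, and $\partial_{p^k}(y_m)=0$ for $k=1,\ldots,m-1$ forces the higher digits to vanish. Therefore $j=1$, $\chi(v)=1+v$, and $\partial_n(y_m)=0$ for every $n\geqslant 2$, completing the induction.
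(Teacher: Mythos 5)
Your base case and the construction of $y_m$ follow the paper's route (Lemma \ref{dxisx}, then iterating Lemma \ref{lemma2}; note that "picking the auxiliary element generically so that the output remains non-zero" is exactly the point that the paper secures with Lemma \ref{lemma1} together with Lemma \ref{indepenendent}, so you should cite those rather than wave at genericity). The problem is the \textbf{Promotion} step, which contains a genuine gap. The iterativity diagram applied to $y:=y_m$ gives
$$y\,\chi\bigl(v+w+vw\bigr)\;=\;\ev_F\bigl(\partial_u(y)\bigr)\;=\;\partial_v[w]\bigl(\partial_w(y)\bigr)\;=\;\sum_{i,j}\partial_i\bigl(\partial_j(y)\bigr)v^iw^j,$$
and the right-hand side cannot be "computed from $\partial_v(y)=y\chi(v)$'': the map $\partial_v[w]$ acts on each coefficient $\partial_j(y)$ separately, so you need to know $\partial_i(\partial_j(y))$, not just $\partial_i(y)$. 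Your functional equation $\chi(v+w+vw)=\chi(v)\chi(w)$ is equivalent to the identities $\partial_i(\partial_j(y))=y^{-1}\partial_i(y)\partial_j(y)$ for all $i,j$, i.e.\ to every $y^{-1}\partial_j(y)$ being a constant of $\partial$ --- which is precisely (a reformulation of) the conclusion $\partial_v(y)=y(1+v)$ you are trying to prove. So the argument is circular at this point; the subsequent Hopf-algebra classification of group-like elements and the Lucas computation are fine but rest on an unestablished premise.

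The conclusion you want is true and the upgrade from the $p$-power indices to all indices is exactly what the paper's Lemma \ref{ppowers} provides: setting $x:=y_{m-1}-1$, one compares $\partial$ at $x$ with the canonical $\gm$-derivation $\partial'$ on $\Ff_p[x]$ (Example \ref{canderm}), for which $\partial'_1(x)=x+1$ and $\partial'_n(x)=0$ for $n\geqslant 2$; since the two agree at the indices $1,p,\ldots,p^{m-1}$, the recursion $\partial_{p^k}\circ\partial_l={n\choose l}\partial_n+\mathcal{O}(\partial_{<n})$ of Lemma \ref{approx} forces agreement at every $n<q$. You could repair your proof by replacing the functional-equation argument with this comparison (or by running the Lemma \ref{approx} recursion directly, as you in fact did correctly in your $m=1$ base case, where $\partial_1\circ\partial_l=l\partial_l+(l+1)\partial_{l+1}$ is the $k=0$ instance of that recursion).
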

\begin{proof}
First, we inductively construct a sequence $x_0,\ldots,x_{m-1}\in K\setminus\{0\}$ such that for all $i\in \{0,\ldots,m-1\}$ we have
\begin{equation}
\partial_1(x_i)=x_i,\ \ \partial_{p}(x_i)=0,\ \ldots\ , \partial_{p^i}(x_i)=0.\tag{$*_i$}
\end{equation}
\noindent
Using Lemma \ref{dxisx}, we get a non-zero $x_0\in K$ such that $\partial_1(x_0)=x_0$. Assume that $i<m-1$ and that we have a non-zero $x_i\in K$ satisfying $(*_i)$.
\\
By Lemma \ref{lemma1} for all $y\in K$ we have (setting $q:=p^{i+1}$),
\begin{IEEEeqnarray*}{rCl}
y^{p^{i+1}}x_i-\partial_{p^{i+1}}^{(p-1)}(y^{p^{i+1}}x_i)
&=&
y^{p^{i+1}}x_i-\sum_{l=0}^{p-1}{p-1 \choose l}\partial^{(p-1-l)}_{p^{i+1}}(x_i)  \cdot\partial_1^{(l)}(y)^{p^{i+1}}\\
  &=& \Big(\sum_{l=0}^{p-2} \alpha_i \cdot \partial_1^{(l)}(y)^{q}\Big) + x_i\partial_1^{(p-1)}(y)^{q},
\end{IEEEeqnarray*}
\noindent
for some $\alpha_0,\ldots,\alpha_{p-2}\in K$. Since $x_i\neq 0$, by Lemma \ref{indepenendent} there is $y_0\in K$ such that
$$x_{i+1}:=y_0^{p^{i+1}}x_i-\partial_{p^{i+1}}^{(p-1)}(y_0^{p^{i+1}}x_i)$$
is non-zero. By Lemma \ref{lemma2}, the element  $x_{i+1}$ satisfies $(*_{i+1})$.
\\
Let $x:=x_{m-1}-1$. Then we have
$$\partial_1(x)=\partial_1(x_{m-1})=x_{m-1}=x+1\neq 0.$$
Clearly $\partial_{p^i}(x)=0$ for $i=1,\ldots,m-1$. Since $\partial_1(x)\neq 0$, $x$ is necessarily transcendental over $\Ff_p$. By Example \ref{canderm}, there is a canonical $\gm$-derivation $\partial'$ on $\Ff_p[x]$. By the definition of the canonical $\gm$-derivation, we have $\partial'_1(x)=x+1$ and $\partial'_j(x)=0$ for $j>0$ (since the multiplicative formal group law is given by the following power series $X+(X+1)Y$). Hence $\partial_{p^i}(x)=\partial'_{p^i}(x)$ for $i=1,\ldots,m-1$. By Lemma \ref{ppowers}, $\partial_{n}(x)=\partial'_{n}(x)$ for $n<p^m-1$, hence $\partial_n(x)=0$ for $n\in \{2,\ldots,p^m-1\}$.
\end{proof}
\noindent
We have now all the ingredients to prove a multiplicative version of Theorem \ref{mainadditive}. Since the proof is exactly the same as the proof of Theorem \ref{mainadditive}, we will skip it.
\begin{theorem}\label{mainmultiplicative}
Let $M\subseteq K$ be a separable field extension and $\partial$ a $\gm[m]$-derivation on $K$ over $M$. Then $\partial$ can be expanded to a $\gm$-derivation on $K$ over $M$.
\end{theorem}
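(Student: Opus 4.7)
My plan is to follow the template of Theorem \ref{mainadditive} verbatim, substituting the multiplicative canonical element from Proposition \ref{caninacalm} for the additive one and the canonical $\gm$-derivation from Example \ref{canderm} for the canonical $\ga$-derivation.

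First I would handle the case $\partial_1 \neq 0$. Proposition \ref{caninacalm} supplies $x \in K$ with $\partial_1(x) = x + 1 \neq 0$ and $\partial_j(x) = 0$ for $2 \leq j < q$. Setting $K_i := \bigcap_{0 < j < p^{i+1}} \ker(\partial_j)$, the combination of Lemma \ref{hsfrob}(2), Remark \ref{constcoinc}(2), and Proposition \ref{degree} gives $[K : K_i] = p^{i+1}$. The identity
$$\partial_{p^i}(x^{p^i}) = \partial_1(x)^{p^i} = (x + 1)^{p^i} \neq 0$$
shows $x^{p^i} \notin K_i$, so the hypotheses of Lemma \ref{pbasis} are met, producing $B_0 \subseteq K_{m-1}$ such that $B := B_0 \cup \{x\}$ is a $p$-basis of $K$ over $M$. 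Since $x$ is transcendental over $M(B_0)$ by \cite[Theorem 26.8]{mat}, Example \ref{canderm} furnishes a $\gm$-derivation on $M(B)$ over $M(B_0)$ whose $q$-truncation agrees with $\partial$ on $B$. The extension $M(B) \subseteq K$ is \'{e}tale, so by \cite[Theorem 27.2]{mat} this $\gm$-derivation extends uniquely to a $\gm$-derivation $D$ on $K$ over $M$; the $q$-truncation of $D$ coincides with $\partial$ because they agree on the $p$-basis $B$.

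The case $\partial_1 = 0$ is reduced to the first as in the additive proof. Pick $j$ minimal with $\partial_{p^j} \neq 0$; Lemma \ref{approx}, Corollary \ref{pvanish}, and a Lucas-style induction on $n$ (using that for $n = qp^j + r$ with $0 < r < p^j$ one has $\binom{n}{r} \equiv 1$ and, crucially in the multiplicative rule, $\binom{qp^j}{r} \equiv 0$ modulo $p$) force $\partial_n = 0$ for every $n < p^m$ not divisible by $p^j$. Since $\gmf$ is defined over $\Ff_p$ we have $\gm^{\fr^j} = \gm$, so Lemma \ref{hsfrob}(4) makes $\partial' := (\partial_{ip^j})_{i < p^l}$, with $l := m - j$, a $\gm[l]$-derivation satisfying $\partial'_1 = \partial_{p^j} \neq 0$. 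The first case expands $\partial'$ to a $\gm$-derivation $D'$, and I set $D_n := D'_{n/p^j}$ when $p^j \mid n$ and $D_n := 0$ otherwise. Because $X^{p^j} : \gmf \to \gmf$ is a morphism of formal group laws (Frobenius preserves $X + Y + XY$ in characteristic $p$), Lemma \ref{pullpushf} applied along $\ev_{X^{p^j}}$ confirms that $D$ is $\gm$-iterative, and by construction it extends $\partial$.

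The essential new ingredient, Proposition \ref{caninacalm}, is already in place, so the remainder is a faithful transcription of the additive argument. The step that most deserves attention is the vanishing argument in the $\partial_1 = 0$ case: unlike in the additive situation, where the iterativity rule cleanly yields $\partial_r \circ \partial_{n-r} = \binom{n}{r}\partial_n$, the multiplicative rule (Example \ref{multiter}) contributes lower-order terms, and closing the induction requires exactly the second Lucas vanishing $\binom{qp^j}{r} \equiv 0$ to remove the otherwise obstructing $\partial_{qp^j}$ term. The other subtle point, inherited from the additive proof, is that the unique extension along the \'{e}tale step preserves $\gm$-iterativity; this follows from the uniqueness of extensions of ordinary derivations together with the diagrammatic nature of the $\gm$-iterativity condition.
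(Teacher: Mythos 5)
Your proposal is correct and follows essentially the same route as the paper, which explicitly proves Theorem \ref{mainmultiplicative} by declaring the argument identical to that of Theorem \ref{mainadditive}, with Proposition \ref{caninacalm} and Example \ref{canderm} supplying the multiplicative canonical element and canonical derivation. Your extra care in the $\partial_1=0$ reduction (the Lucas-type vanishing $\binom{s}{r}\equiv 0$ needed to kill the lower-order term $\partial_s$ coming from the multiplicative iterativity rule) is a detail the paper leaves implicit, and it is handled correctly.
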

\begin{remark}
The case of $m=1$ is contained in the main theorem of \cite{Tyc}.
\end{remark}

\subsection{Mixed case}
For an arbitrary $c\in k$, one can consider the formal group law $F_c:=X+Y+cXY$ and $F_c$-iterative derivations (as in \cite{Tyc}). Clearly, $F_c$ specializes to the additive law (for $c=0$) and to the multiplicative law (for $c=1$). However, it is easy to see that for $c\neq 0$, we have $F_c\cong F_1$. Thus by Lemma \ref{pullpushf}, considering $F_c$-iterative derivations reduces to considering additively iterative ones (Section \ref{secadd}) and multiplicatively iterative ones (Section \ref{secmult}).
\\
For the sake of completeness, we notice that if $\partial=(\partial_i)_i$ is an $F_c$-iterative derivation, then $\partial_1^{(p)}=c^{p-1}\partial_1$.

\section{Several HS-derivations}\label{multi}
\noindent
We could have extended the results of Section \ref{secexpanding} to the case of several iterative HS-derivations. For example, the crucial notion of a canonical element would be replaced with the notion of a \emph{canonical $p$-basis} as in \cite{Zieg2}. However, we feel that such a generalization would not be a satisfactory one. For example, a tuple of $e$ commuting $\ga$-derivations is the same as a ``$\ga^e$-derivation'', see \cite[Prop. 2.20]{MS}. Therefore one should consider $G$-derivations for an arbitrary (not necessarily commutative!) algebraic group or even a formal group. Such sequences of HS-derivations are studied in \cite{HK}.
\\
\\
It may be also interesting to compare Pierce's theory of several derivations in arbitrary characteristic \cite{Pierce3} where the composition of derivations is governed by a Lie algebra $\mathfrak{g}$ with the theory of $G$-derivations for $\lie(G)=\mathfrak{g}$.

\bibliographystyle{plain}
\bibliography{harvard}

\begin{thebibliography}{10}

\bibitem{cr98}
G~Crupi, M.;~Restuccia.
\newblock Integrable derivations and formal groups in unequal characteristic.
\newblock {\em Rend. Circ. Mat. Palermo}, 47(2):169--190, 1998.

\bibitem{cr2001}
G~Crupi, M.;~Restuccia.
\newblock Iterative differentiations in rings of unequal characteristic.
\newblock {\em Boll. Unione Mat. Ital. Sez. B Artic. Ric. Mat.}, 4(3):635--646,
  2001.

\bibitem{Demazure}
M.~Demazure.
\newblock {\em Lectures on $p$-divisible groups}, volume 302 of {\em Lecture
  Notes in Mathematics}.
\newblock Springer-Verlag, 1972.

\bibitem{comm}
D.~Eisenbud.
\newblock {\em Commutative Algebra with a View Towards Algebraic Geometry}.
\newblock Springer, 1996.

\bibitem{Gil}
H.~Gillet.
\newblock Differential algebra: A scheme theory approach.
\newblock In {\em Differential Algebra and Related Topics}, pages 95--123.
  World Sci., River Edge, N.J., Newark, N.J., 2002.

\bibitem{Hasse}
D.~Hasse.
\newblock Theorie der h\"{o}heren {D}ifferentiale in einem algebraischen
  {F}unktionenk\"{o}rper mit vollkommenem {K}onstantenk\"{o}rper bei beliebiger
  {C}harakteristik.
\newblock {\em J. reine und angew. Math.}, 175:50--54, 1936.

\bibitem{Hazew}
Michiel Hazewinkel.
\newblock {\em Formal Groups and Applications}.
\newblock Academic Press, 1978.

\bibitem{Ho}
Wilfrid Hodges.
\newblock {\em A shorter model theory}.
\newblock Cambridge University Press, Cambridge, 1997.

\bibitem{HK}
Daniel Hoffmann and Piotr Kowalski.
\newblock Existentially closed fields with ${G}$-derivations.
\newblock Available on \verb"http://arxiv.org/abs/1404.7475".

\bibitem{HK2}
Daniel Hoffmann and Piotr Kowalski.
\newblock A note on integrating group scheme actions.
\newblock Available on
  \verb"http://www.math.uni.wroc.pl/~pkowa/mojeprace/integnote.pdf".

\bibitem{kol1}
E.R. Kolchin.
\newblock {\em Differential Algebra and Algebraic Groups}.
\newblock Pure and applied mathematics. Academic Press, 1973.

\bibitem{K3}
Piotr Kowalski.
\newblock Geometric axioms for existentially closed {H}asse fields.
\newblock {\em Annals of Pure and Applied Logic}, 135:286--302, 2005.

\bibitem{Manin}
Yu~I. Manin.
\newblock The theory of commutative formal groups over fields of finite
  characteristic.
\newblock {\em Russ. Math. Surv.}, 18(6):1--83, 1963.

\bibitem{Mats1}
Hideyuki Matsumura.
\newblock Integrable derivations.
\newblock {\em Nagoya Math. J.}, 87:227--245, 1982.

\bibitem{mat}
Hideyuki Matsumura.
\newblock {\em Commutative ring theory}.
\newblock Cambridge University Press, 1986.

\bibitem{MS}
Rahim Moosa and Thomas Scanlon.
\newblock Generalized {H}asse-{S}chmidt varieties and their jet spaces.
\newblock {\em Proc. Lond. Math. Soc.}, 103(2):197--234, 2011.

\bibitem{Pierce3}
D.~Pierce.
\newblock Fields with several commuting derivations, to appear in the {J}ournal
  of {S}ymbolic {L}ogic.
\newblock Available on \\
  \verb"http://mat.msgsu.edu.tr/~dpierce/Mathematics/Derivations/Revisited/".

\bibitem{ResMat}
H.~Restuccia, G.;~Matsumura.
\newblock Integrable derivations in rings of unequal characteristic.
\newblock {\em Nagoya Math. J.}, 93:173--178, 1984.

\bibitem{HS}
F.K. Schmidt and D.~Hasse.
\newblock Noch eine {B}egr\"{u}ndung der {T}heorie der h\"{o}heren
  {D}ifferentialquotienten in einem algebraischen {F}unktionenk\"{o}rper einer
  {U}nbestimmten.
\newblock {\em J. reine und angew. Math.}, 177:215–--237, 1937.

\bibitem{Si}
Joseph~H. Silverman.
\newblock {\em The Arithmetic of Elliptic Curves}.
\newblock Graduate Texts in Mathematics. Springer-Verlag, 1986.

\bibitem{Tyc}
A.~Tyc.
\newblock On ${F}$-integrable actions of the restricted {L}ie algebra of a
  formal group ${F}$ in characteristic $p > 0$.
\newblock {\em Nagoya Math. J.}, 115:125--137, 1989.

\bibitem{Water}
William~C. Waterhouse.
\newblock {\em Introduction to {A}ffine {G}roup {S}chemes}.
\newblock Springer-Verlag, 1979.

\bibitem{Zieg3}
M.~Ziegler.
\newblock Canonical $p$-bases, preprint.
\newblock Available on \\
  \verb"http://home.mathematik.uni-freiburg.de/ziegler/preprints/canonical-p-bases.pdf".

\bibitem{Zieg2}
M.~Ziegler.
\newblock Separably closed fields with {H}asse derivations.
\newblock {\em Journal of Symbolic Logic}, 68:311--318, 2003.

\end{thebibliography}

\end{document}